\newtheoremstyle{thm}% name
{9pt}%      Space above, empty = `usual value'
{9pt}%      Space below
{\itshape}% Body font
{}%         Indent amount (empty = no indent, \parindent = para indent)
{\bfseries}% Thm head font
{.}%        Punctuation after thm head
{ }% Space after thm head: \newline = linebreak
{}%         Thm head spec
\theoremstyle{thm}
\newtheorem{theorem}{Theorem}[section]
\newtheorem{lemma}[theorem]{Lemma}
\newtheorem{corollary}[theorem]{Corollary}
\newtheoremstyle{def}% name
{9pt}%      Space above, empty = `usual value'
{9pt}%      Space below
{}% Body font
{}%         Indent amount (empty = no indent, \parindent = para indent)
{\bfseries}% Thm head font
{.}%        Punctuation after thm head
{ }% Space after thm head: \newline = linebreak
{}%         Thm head spec
\theoremstyle{def}
\newcommand{\R}{\mathbb{R}} % reelle
\newcommand{\N}{\mathbb{N}} % natuerliche
\newcommand{\E}{\mathbb{E}} %Erwartungswert
\newcommand{\PP}{\mathbb{P}} %Wahrscheinlichkeit
\newcommand{\fse}{\overset{a.s.}{\longrightarrow}}
    \def\cd{\stackrel{\mathcal{D}}{\longrightarrow}}
    \def\cp{\stackrel{\mathcal{\PP}}{\longrightarrow}}
\renewcommand{\footnoterule}{%
	\kern -3.5pt
	\hrule width \textwidth height 1pt
	\kern 3.5pt
}
\def\blfootnote{\xdef\@thefnmark{}\@footnotetext}
\title{On combining the zero bias transform and the empirical characteristic function to test normality}
\author{Bruno Ebner\\
 Institute of Stochastics, \\
Karlsruhe Institute of Technology (KIT), \\
Englerstr. 2, D-76133 Karlsruhe. \\
\texttt{Bruno.Ebner@kit.edu}\\
}
\begin{document}

\date{\today}
\maketitle

\blfootnote{ {\em MSC 2010 subject
classifications.} Primary 62G10 Secondary 62E10}
\blfootnote{
{\em Key words and phrases} Goodness-of-fit; Normal Distribution; Stein's Method; Zero Bias Transformation; Empirical Characteristic Function}

\begin{abstract}
We propose a new powerful family of tests of univariate normality. These tests are based on an initial value problem in the space of characteristic functions originating from the fixed point property of the normal distribution in the zero bias transform. Limit distributions of the test statistics are provided under the null hypothesis, as well as under contiguous and fixed alternatives. Using the covariance structure of the limiting Gaussian process from the null distribution, we derive explicit formulas for the first four cumulants of the limiting random element and apply the results by fitting a distribution from the Pearson system. A comparative Monte Carlo power study shows that the new tests are serious competitors to the strongest well established tests.
\end{abstract}

\section{Introduction}
\label{Intro}
In view of the assumption of normality in many classical models, testing for normality is commonly known as the mostly used and discussed goodness-of-fit technique. To be specific, let $X, X_1, X_2, \dotso$ be real-valued independent and identically distributed (iid.) random variables defined on an underlying probability space $(\Omega,\mathcal{A},\mathbb{P})$. The problem of interest is to test the hypothesis
\begin{equation}\label{H0}
	H_0: \mathbb{P}^X \in \mathcal{N}=\{ N(\mu, \sigma^2) \, | \, (\mu, \sigma^2) \in \R \times (0,\infty) \}
\end{equation}
against general alternatives. This testing problem has been considered extensively and a multitude of different test statistics is available. The classical tests are based on the empirical distribution function, like the Kolmogorov-Smirnov test (modified in \cite{L:1967}), the Anderson-Darling test, see \cite{AD:1952}, the empirical characteristic function, see \cite{EP:1983}, the empirical moment generating function, see \cite{HV:2019}, on empirical measures of skewness and kurtosis, see \cite{BS:1975, JB:1980, PDB:1977} (known to lead to inconsistent procedures), the Wasserstein distance, see \cite{Betal:2000}, measures of entropy, see \cite{TAA:2019,V:1976}, the integrated empirical distribution function, see \cite{K:2001}, or correlation and regression tests, like the time-honored  ''bench-mark test'' of Shapiro-Wilk , see \cite{SW:1965}, among others. For a survey of classical methods see \cite{Betal:2000}, section 3, and \cite{H:1994}, and for comparative simulation studies, see \cite{BDH:1989,FRS:2006,LL:1992,PDB:1977,RDC:2010,SWC:1968,YS:2011}. For a survey on tests of multivariate normality see \cite{H:2002}, for recent multivariate tests see \cite{DEH:2019}, and for new developments on normality tests for Hilbert space valued random elements, see \cite{HJG:2019,KC:2019}.

Our novel approach relies on the famous Stein characterization and the connected zero bias transform: It is well-known that the normal distribution is the fixed point of the zero bias transform, see \cite{CGS:2011,GR:1997}. Let $X$ be a centred random variable with $\sigma^2=\E(X^2)<\infty$. Following \cite{S:2013}, the characteristic function of the $X$-zero bias transformed random variable $X^*$ is
\begin{equation*}\label{CFZBT}
\E\left(e^{itX^*}\right)=\left\{\begin{array}{cc}\displaystyle -\frac1{\sigma^2}\frac{\varphi^\prime(t)}{t}, & t\in\R\setminus\{0\},\\ 1, & t=0,\end{array}\right.
\end{equation*}
where $\varphi(\cdot)$ is the characteristic function of $X$, and $i$ stands for the imaginary unit. Indeed, (\ref{CFZBT}) represents an operator $A$ mapping from the space of characteristic functions into itself, where $A\varphi=\varphi^\prime/\varphi^{\prime\prime}(0)$, see statement (a) of Theorem 12.2.5 in \cite{L:1970}, and apply $\varphi^{\prime}(0)=i\E(X)=0$ and $\varphi^{\prime\prime}(0)=-\sigma^2$. Together with the assumption $\sigma^2=1$ and the fixed point approach this leads to the initial value problem of an ordinary differential equation
\begin{equation}\label{AWP}
\left\{\begin{array}{c}\varphi^\prime(t)=-t\varphi(t),\\ \varphi(0)=1. \end{array}\right.
\end{equation}
The unique solution of this initial value problem is $\varphi(t)=\exp\left(-t^2/2\right)$, $t\in\R$, which is (confirming the fix point argument) the characteristic function of the standard normal distribution. Note that the moment assumptions ensure the existence of the derivative of the characteristic function, see Corollary 1 and 2 to Theorem 2.3.1 in \cite{L:1970}. To model the standardization assumption leading to (\ref{AWP}), we consider the scaled residuals
\begin{equation*}
Y_{n,j}=\frac{X_j-\overline{X}_n}{S_n},\quad j=1,\ldots,n,
\end{equation*}
where $\overline{X}_n=\frac1n\sum_{j=1}^n X_j$ is the mean and $S_n^2=\frac1{n}\sum_{j=1}^n(X_j-\overline{X}_n)^2$ is the sample variance. Denoting the empirical characteristic function by $\varphi_n(t)=\frac1n\sum_{j=1}^n\exp(itY_{n,j}),\,t\in\R,$ we have $\varphi_n^\prime(t)=\frac1n\sum_{j=1}^niY_{n,j}\exp(itY_{n,j}),\,t\in\R,$ and by estimating both sides of (\ref{AWP}) we propose
the test statistic
\begin{equation*}
Z_{n}=n\int_{-\infty}^{\infty}\left|\frac1n\sum_{j=1}^n\ (iY_{n,j}+t)\exp(itY_{n,j})\right|^2w(t)\mbox{d}t,
\end{equation*}
where $w(\cdot)$ is a suitable bounded weight function and $|x|^2=\mbox{Re}(x)^2+\mbox{Im}(x)^2$ is the squared absolute value of a complex number $x\in\mathbb{C}$. If $X$ originates from a normal distribution, $Z_n$ should be close to zero, and thus rejection of $H_0$ in (\ref{H0}) will be for large values of $Z_n$ (empirical and asymptotic critical values are specified in Section \ref{sec:ApproxCV}). Tacitly, we assume the conditions
\begin{equation}\label{eq:weight}
w(t)=w(-t),\quad t\in\R,\quad \int_{-\infty}^\infty w(t)\mbox{d}t<\infty.
\end{equation}
Note that $Z_n$ depends only on the scaled residuals $Y_{n,1},\ldots, Y_{n,n}$ and is hence invariant under translation or rescaling of the data set $X_1,\ldots,X_n$, which indeed is a desirable property, since the family $\mathcal{N}$ is closed under affine transformations.
Setting $w(t)=w_a(t)=\exp(-at^2)$, $a>0$, a direct evaluation of integrals shows that $Z_n$ takes the form
\begin{eqnarray*}
Z_{n,a}&=&\frac1n\sqrt{\frac{\pi}{a}}\sum_{j,k=1}^n\left(\frac1{4a^2}\left(2a-(Y_{n,j}-Y_{n,k})^2\right)\right.\left.-\frac1{2a}(Y_{n,j}-Y_{n,k})^2+Y_{n,j}Y_{n,k}\right)\exp\left(-\frac1{4a}(Y_{n,j}-Y_{n,k})^2\right),
\end{eqnarray*}
which represents a computational stable and easy to implement version of $Z_n$.
By some expansion of the exponential function and noting that $\sum_{j=1}^nY_{n,j}=0$ and $\sum_{j=1}^nY_{n,j}^2=n$, we have elementwise on the probability space
\begin{equation*}
\lim_{a\rightarrow\infty}\frac{16a^{\frac52}}{3n\sqrt{\pi}}Z_{n,a}=\left(\frac1n\sum_{j=1}^nY_{n,j}^3\right)^2\quad\mbox{and}\quad\lim_{a\rightarrow0}\sqrt{\frac{a}{\pi}}Z_{n,a}-\frac1{2a}=1.
\end{equation*}
It is interesting to see that the limit for $a\rightarrow\infty$ is squared sample skewness, and that this limiting behaviour coincides with the one observed in \cite{HV:2019}, section 4.

The rest of the paper is organized as follows. In Section \ref{sec:AsyNull} we derive the limit distribution of $Z_{n,a}$ under the null hypothesis. Section \ref{sec:ContAlt} states results under a sequence of contiguous alternatives, while in Section \ref{sec:ConsistFixed} we show that the new tests are consistent against alternatives satisfying a weak moment condition. Furthermore, we obtain a central limit result for the test. In Section \ref{sec:ApproxCV}, we derive explicit formulas for the first four cumulants of the limit null distribution of $Z_{n,a}$ and fit the Pearson-system of distributions to approximate the critical values of the test statistic. We complete the paper by a competitive Monte Carlo simulation study in Section \ref{sec:Simu} and finally draw conclusions and identify some open problems for further research in Section \ref{sec:conc}. The paper is concluded by an Appendix that contains proofs and the formula of the fourth cumulant.

\section{Asymptotic null distribution}\label{sec:AsyNull}
A suitable setup for deriving asymptotic theory is the Hilbert space of measurable, square integrable functions $L^2=L^2(\mathbb{R},\mathcal{B}, w\mbox{d}\mathcal{L}^1)$, where $\mathcal{B}$ is the Borel-$\sigma$-field of $\mathbb{R}$ and $\mathcal{L}^1$ is the Lebesgue measure on $\mathbb{R}$. Notice that the functions figuring within the integral in the definition of $Z_{n}$ are $(\mathcal{A} \otimes \mathcal{B}, \mathcal{B})$-measurable random elements of $L^2$. We denote by
\begin{equation*}
	\|f\|_{L^2} = \left( \int_{\R} \big|f(t)\big|^2 \, \omega(t) \, \mathrm{d}t \right)^{1/2}, \qquad \langle f, g \rangle_{L^2}=\int_{\R} f(t)g(t) \, \omega(t) \, \mathrm{d}t
\end{equation*}
the usual norm and inner product in $L^2$.
After straightforward calculations using (\ref{eq:weight}) and symmetry arguments, we have
\begin{equation*}
Z_n=\int_{-\infty}^\infty W_n^2(t)w(t)\mbox{d}t,
\end{equation*}
where
\begin{equation}\label{eq:DefWn}
W_n(t)=\frac1{\sqrt{n}}\sum_{j=1}^n(Y_{n,j}+t)\cos(tY_{n,j})+(t-Y_{n,j})\sin(tY_{n,j}),\quad t\in\R.
\end{equation}
Motivated by a multivariate Taylor expansion we consider the processes
\begin{eqnarray*}
W^*_n(t)&=&\frac1{\sqrt{n}}\sum_{j=1}^n(X_j+t)\cos(tX_j)+(t-X_j)\sin(tX_j)\\
&&+\left((tX_j-(t^2+1))\cos(tX_j)+(tX_j+(t^2+1))\sin(tX_j)\right)\overline{X}_n\\
&&+X_j\left((tX_j-(t^2+1))\cos(tX_j)+(tX_j+(t^2+1))\sin(tX_j)\right)(S_n-1)
\end{eqnarray*}
and
\begin{equation*}
\tilde{W}_n(t)=\frac1{\sqrt{n}}\sum_{j=1}^n(X_j+t)\cos(tX_j)+(t-X_j)\sin(tX_j)-\exp\left(-\frac{t^2}2\right)X_j+t\exp\left(-\frac{t^2}2\right)(X_j^2-1),
\end{equation*}
$t\in\R$. In what follows let $X_1,X_2,\ldots$ be iid. random variables, and in view of affine invariance of $Z_{n}$ we assume w.l.o.g. $X_1\sim N(0,1)$. The following Lemma shows that the processes $W_n$, $W^*_n$ and $\tilde{W}_n$ are asymptotically equivalent. The proof is found in Appendix \ref{app:prooflem1}.
\begin{lemma}\label{lem:conviP}
We have under $H_0$
\begin{equation*}
\|W_n-W^*_n\|_{L^2}\cp0\quad\mbox{and}\quad\|W_n^*-\tilde{W}_n\|_{L^2}\cp0.
\end{equation*}
\end{lemma}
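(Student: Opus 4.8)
The plan is to treat all three objects as $L^2$-valued sample means of the form $\frac1{\sqrt n}\sum_j g(\,\cdot\,;t)$, and to pass from $W_n$ to $\tilde W_n$ by two successive linearisations: the first removing the standardisation by $\overline X_n$ and $S_n$, the second freezing the resulting random coefficients at their expectations. Throughout I would lean on the standard facts that, under $H_0$, $\sqrt n\,\overline X_n=O_{\PP}(1)$, $\sqrt n(S_n-1)=O_{\PP}(1)$ and $\sqrt n(S_n-1)=\tfrac12\frac1{\sqrt n}\sum_{j=1}^n(X_j^2-1)+o_{\PP}(1)$, together with the elementary bound that $g(x,t)=(x+t)\cos(tx)+(t-x)\sin(tx)$ and its $x$-derivatives are dominated by polynomials in $|t|$ and $|x|$, so that the relevant $L^2$-norms are finite as soon as $w$ integrates these powers of $t$.

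For the first convergence I would write $W_n(t)=\frac1{\sqrt n}\sum_j g(Y_{n,j},t)$ and Taylor-expand $g(Y_{n,j},t)$ about $X_j$. From $Y_{n,j}=(X_j-\overline X_n)/S_n$ one obtains $Y_{n,j}-X_j=-\overline X_n-X_j(S_n-1)+r_{n,j}$, where $r_{n,j}$ gathers the products $\overline X_n(S_n-1),\,X_j(S_n-1)^2,\dots$ and is of smaller order. A direct differentiation shows that the first-order term $\partial_x g(X_j,t)\,\bigl(-\overline X_n-X_j(S_n-1)\bigr)$ reproduces exactly the second and third summands in the definition of $W_n^*$, so that $W_n-W_n^*$ equals the sum of the Lagrange remainders $\tfrac12\partial_x^2 g(\xi_{n,j},t)(Y_{n,j}-X_j)^2$ plus the contribution of the $r_{n,j}$. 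Bounding $|\partial_x^2 g(\xi_{n,j},t)|$ by a polynomial in $|t|$ and $|\xi_{n,j}|\le|X_j|+|\overline X_n|+|X_j|\,|S_n-1|$, and using $(Y_{n,j}-X_j)^2=O_{\PP}(n^{-1})(1+X_j^2)$ together with the weak law for the empirical moments of the $X_j$, I would conclude $\|W_n-W_n^*\|_{L^2}=O_{\PP}(n^{-1/2})$.

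For the second convergence I would regroup $W_n^*(t)=\frac1{\sqrt n}\sum_j g(X_j,t)+\sqrt n\,\overline X_n\,\frac1n\sum_j h(X_j,t)+\sqrt n(S_n-1)\,\frac1n\sum_j X_j h(X_j,t)$, where $h(x,t)=-\partial_x g(x,t)=(tx-(t^2+1))\cos(tx)+(tx+(t^2+1))\sin(tx)$. Using the Gaussian identities $\E[\cos(tX)]=e^{-t^2/2}$, $\E[X\sin(tX)]=te^{-t^2/2}$ and $\E[X^2\cos(tX)]=(1-t^2)e^{-t^2/2}$, the population coefficients evaluate to $\E[h(X,t)]=-e^{-t^2/2}$ and $\E[Xh(X,t)]=2te^{-t^2/2}$. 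The proof then reduces to three replacements, each with $L^2$-error $o_{\PP}(1)$: substituting $\frac1n\sum_j h(X_j,\cdot)$ by $\E[h(X,\cdot)]$ and $\frac1n\sum_j X_jh(X_j,\cdot)$ by $\E[Xh(X,\cdot)]$ through an $L^2$-valued weak law (whose error has mean square $\frac1n\int\mathrm{Var}(\cdot)\,w\,\mathrm dt\to0$), multiplied by the $O_{\PP}(1)$ scalars $\sqrt n\,\overline X_n$ and $\sqrt n(S_n-1)$; and finally replacing $\sqrt n(S_n-1)$ by $\tfrac12\frac1{\sqrt n}\sum_j(X_j^2-1)$, the scalar error $o_{\PP}(1)$ multiplying the fixed function $\E[Xh(X,\cdot)]\in L^2$. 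After these steps $W_n^*(t)$ becomes $\frac1{\sqrt n}\sum_j\{g(X_j,t)-e^{-t^2/2}X_j+te^{-t^2/2}(X_j^2-1)\}=\tilde W_n(t)$, which is the claim.

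I expect the main obstacle to be the uniform control of the Lagrange remainder in the first step: because the intermediate point $\xi_{n,j}$ depends on the data and the derivatives of $g$ grow polynomially in $t$, one must keep every error term simultaneously $o_{\PP}(1)$ in the $t$-weighted $L^2$-norm, which forces the moment bookkeeping on the $X_j$ and the integrability of the relevant powers of $t$ against $w$ to be carried out with care. By comparison, the second step is routine once the finiteness of $\int\mathrm{Var}(h(X,t))\,w(t)\,\mathrm dt$ and $\int\mathrm{Var}(Xh(X,t))\,w(t)\,\mathrm dt$ has been recorded.
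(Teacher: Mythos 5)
Your proposal is correct and follows essentially the same route as the paper: a first-order linearisation of the standardisation (the paper Taylor-expands $\Psi_{t,x}(\mu,\sigma)$ in the parameters around $(0,1)$, you expand $g(\cdot,t)$ in the argument $Y_{n,j}$ around $X_j$ --- the same computation, since $\partial_x g(x,t)=-\Psi(t,x)$), followed by the Gaussian identities giving $\E[\Psi(t,X)]=-e^{-t^2/2}$ and $\E[X\Psi(t,X)]=2te^{-t^2/2}$, an $L^2$-valued law of large numbers for the empirical coefficients, and the expansion $\sqrt{n}(S_n-1)=\tfrac{1}{2\sqrt{n}}\sum_{j}(X_j^2-1)+o_{\mathbb{P}}(1)$. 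Your handling of the Lagrange remainder in the first step is, if anything, more explicit than the paper's brief appeal to higher-order terms.
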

In order to derive the asymptotic null distribution of $Z_n$, it suffices to show the weak convergence of $\tilde{W}_n$ in $L^2$ to a centred Gaussian process.
\begin{theorem}\label{thm:nulldist}
Under the standing assumptions, there is a centred Gaussian random element $W$ of $L^2$ with covariance kernel
\begin{equation*}
K_Z(s,t)=(st+1)\exp\left(-\frac{(s-t)^2}2\right)-(2st+1)\exp\left(-\frac{s^2+t^2}2\right),\quad s,t\in\R,
\end{equation*}
such that with $W_n$ defined in (\ref{eq:DefWn}), we have $W_n\cd W$ in $L^2$ as $n\rightarrow\infty$.
\end{theorem}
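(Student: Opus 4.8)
The plan is to reduce the weak convergence of $W_n$ in $L^2$ to that of $\tilde W_n$ via Lemma~\ref{lem:conviP}, and then establish a central limit theorem for $\tilde W_n$ in $L^2$.The plan is to reduce the weak convergence of $W_n$ to a central limit theorem for $\tilde{W}_n$ and then to invoke the CLT in Hilbert space. By Lemma~\ref{lem:conviP} and the triangle inequality, $\|W_n - \tilde{W}_n\|_{L^2} \cp 0$. Since convergence in probability to $0$ of the $L^2$-distance, together with weak convergence of one of the two sequences, transfers the weak limit (the Hilbert-space version of Slutsky's lemma), it suffices to establish $\tilde{W}_n \cd W$ in $L^2$, where $W$ is the centred Gaussian element with kernel $K_Z$.

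To this end I would write $\tilde{W}_n = n^{-1/2}\sum_{j=1}^n V_j$, where $V_j = h(X_j)$ is the $L^2$-valued random element
\[
h(x)(t) = (x+t)\cos(tx) + (t-x)\sin(tx) - e^{-t^2/2}x + t\,e^{-t^2/2}(x^2-1),\qquad t\in\R.
\]
Because $X_1,X_2,\ldots$ are iid under $H_0$, the $V_j$ are iid elements of $L^2$, and the measurability noted in the text guarantees they are genuine random elements. The plan is then to apply the central limit theorem for iid random elements of a separable Hilbert space, for which two conditions must be checked: that $V_1$ is centred, i.e. $\E[V_1] = 0$ in $L^2$, and that it has a finite second moment, $\E\|V_1\|_{L^2}^2 < \infty$. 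The centering follows pointwise from the elementary Gaussian identities $\E[\cos(tX_1)] = e^{-t^2/2}$, $\E[X_1\sin(tX_1)] = t e^{-t^2/2}$ and $\E[X_1\cos(tX_1)] = 0$, which make the mean of the first two summands cancel against the last two. For the second moment one bounds $\E[h(X_1)(t)^2]$ by a polynomial in $t$ of degree two; finiteness of $\E\|V_1\|_{L^2}^2 = \int_\R \E[h(X_1)(t)^2]\,w(t)\,\mathrm{d}t$ then follows provided $\int_\R (1+t^2)w(t)\,\mathrm{d}t < \infty$, which holds for the Gaussian weight $w_a$.

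Granting the Hilbert-space CLT, $\tilde{W}_n$ converges weakly to a centred Gaussian element $W$ whose covariance operator is that of $V_1$, so the only remaining task is to check that its kernel is exactly $K_Z(s,t) = \E[h(X_1)(s)\,h(X_1)(t)]$. I would compute this expectation directly: expanding the product yields a finite sum of terms of the form $\E[X_1^p \cos(sX_1)\cos(tX_1)]$, $\E[X_1^p\sin(sX_1)\sin(tX_1)]$, and mixed sine/cosine products with $p\in\{0,1,2\}$, each of which is a closed-form combination of $e^{-(s-t)^2/2}$, $e^{-(s+t)^2/2}$ and $e^{-(s^2+t^2)/2}$ obtained by differentiating the characteristic function. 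Collecting these contributions should reproduce the two-term expression for $K_Z$. I expect this covariance bookkeeping to be the main obstacle of the proof, not because any single integral is hard, but because correctly accumulating all the polynomial-times-trigonometric Gaussian moments and tracking the cancellations down to the compact form of $K_Z$ is where errors are most likely; by contrast, the reduction via Lemma~\ref{lem:conviP} and the invocation of the CLT are routine once the moment condition on $w$ is in place.
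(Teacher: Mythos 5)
Your proposal follows essentially the same route as the paper's proof: reduce the problem to $\tilde{W}_n$ via Lemma~\ref{lem:conviP}, apply the central limit theorem for iid centred random elements of a separable Hilbert space, and identify the covariance kernel by computing Gaussian trigonometric moments. The extra care you take in verifying the moment condition --- noting that $\E\|V_1\|_{L^2}^2<\infty$ requires $\int_\R(1+t^2)w(t)\,\mathrm{d}t<\infty$ rather than merely $\int_\R w(t)\,\mathrm{d}t<\infty$ --- is a correct and worthwhile refinement, satisfied by the Gaussian weight $w_a$.
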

\begin{proof}
By Lemma \ref{lem:conviP} it follows that the limit distribution of $W_n$ is the same as that of $\tilde{W}_n$. Note that
\begin{equation*}
\tilde{W}_n(t)=\frac1{\sqrt{n}}\sum_{j=1}^n\widetilde{W}_{n,j}(t),\quad t\in\R,
\end{equation*}
where
\begin{equation*}
\widetilde{W}_{n,j}(t)=(X_j+t)\cos(tX_j)+(t-X_j)\sin(tX_j)-\exp\left(-\frac{t^2}2\right)X_j+t\exp\left(-\frac{t^2}2\right)(X_j^2-1),\quad t\in\R,
\end{equation*}
$j=1,2,\ldots,n$ and $\E\widetilde{W}_{n,1}=0$. Since $\widetilde{W}_{n,1},\widetilde{W}_{n,2},\ldots$ are iid. centred elements of $L^2$, we can directly apply the central limit theorem in Hilbert spaces, see Corollary 10.9 in \cite{LT:1991}. Tedious calculations then show that the covariance kernel $K_Z(s,t)=\E\left(\widetilde{W}_{n,1}(s)\widetilde{W}_{n,1}(t)\right)$ takes the given form.
\end{proof}
The next result follows from a direct application of the continuous mapping theorem.
\begin{corollary}\label{cor:nulldistS}
We have as $n \rightarrow\infty$
\begin{equation*}
Z_n\cd \int_{-\infty}^\infty W^2(t)w(t)\,\mbox{d}t=\|W\|^2_{L^2}.
\end{equation*}
\end{corollary}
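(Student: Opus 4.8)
The plan is to recognize $Z_n$ as a fixed continuous functional of the process $W_n$ and then invoke the continuous mapping theorem together with the weak convergence already established in Theorem \ref{thm:nulldist}. Recall from the reformulation preceding (\ref{eq:DefWn}) that $Z_n=\int_{-\infty}^\infty W_n^2(t)\,w(t)\,\mathrm{d}t=\|W_n\|_{L^2}^2$. Hence $Z_n=\Psi(W_n)$, where $\Psi:L^2\to[0,\infty)$ denotes the squared-norm functional $\Psi(f)=\|f\|_{L^2}^2$, and the whole statement reduces to transporting the weak limit of $W_n$ through $\Psi$.

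First I would verify that $\Psi$ is continuous on all of $L^2$. This is immediate: by the reverse triangle inequality $\big|\,\|f\|_{L^2}-\|g\|_{L^2}\,\big|\le\|f-g\|_{L^2}$, so $f\mapsto\|f\|_{L^2}$ is Lipschitz, and composing with the continuous map $x\mapsto x^2$ on $[0,\infty)$ shows that $\Psi$ is continuous everywhere. In particular the set of discontinuity points of $\Psi$ is empty, so the measurability hypothesis $\PP(W\in\operatorname{disc}(\Psi))=0$ required by the continuous mapping theorem holds trivially.

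Then I would combine this with Theorem \ref{thm:nulldist}, which furnishes $W_n\cd W$ in $L^2$. Since $L^2=L^2(\R,\mathcal{B},w\,\mathrm{d}\mathcal{L}^1)$ is a separable Hilbert space, hence a Polish space, the continuous mapping theorem for weakly convergent random elements applies without further caveats and yields $\Psi(W_n)\cd\Psi(W)$, that is, $Z_n\cd\|W\|_{L^2}^2=\int_{-\infty}^\infty W^2(t)\,w(t)\,\mathrm{d}t$, as claimed.

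There is essentially no obstacle here; the only point that deserves explicit mention is the continuity of $\Psi$, handled above. The substantive work has already been carried out in establishing the weak convergence $W_n\cd W$ of Theorem \ref{thm:nulldist}, so the corollary is indeed a one-line consequence once $Z_n$ is written as $\|W_n\|_{L^2}^2$.
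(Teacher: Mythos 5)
Your proposal is correct and follows exactly the route the paper takes: the paper proves this corollary by a direct application of the continuous mapping theorem to the convergence $W_n\cd W$ of Theorem \ref{thm:nulldist}, using the identity $Z_n=\|W_n\|^2_{L^2}$. Your additional remarks on the Lipschitz continuity of the norm and the Polish-space setting merely make explicit what the paper leaves implicit.
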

We will use this result in Section \ref{sec:ApproxCV} to derive the first four cumulants of the limit random element. As a consequence we obtain approximate critical values by the Pearson system of distributions.

\section{Contiguous alternatives}\label{sec:ContAlt}
In this section we consider a triangular array of row-wise iid. random variables $X_{n,1}, \dots, X_{n,n}$, $n \in \N$, with Lebesgue density
\begin{equation*}
	f_n (t) = f(t) \cdot \left( 1 + \tfrac{c(t)}{\sqrt{n}} \,  \right),\,t\in\R.
\end{equation*}
Here, $f(t) = \tfrac{1}{\sqrt{2 \pi}}\exp(-t^2 /2)$, $t\in\R$, is the density of $N(0,1)$, and $c : \R \to \R$ is a measurable, bounded function satisfying $\int_{-\infty}^\infty c(t) \, f(t) \, \mbox{d}t = 0$. Notice that, since $c$ is bounded, we may assume $n$ to be large enough to ensure $f_n \geq 0$. Setting
\begin{equation*}
\mu_n=\bigotimes_{j=1}^n f\mathcal{L}^1\quad\mbox{and}\quad\nu_n=\bigotimes_{j=1}^n f_n\mathcal{L}^1,
\end{equation*}
it is shown in \cite{BE:2020}, section 4, that by LeCam's first Lemma $\nu_n$ is contiguous to $\mu_n$. Writing
\begin{equation*}
\eta(x,s)=(x+s)\cos(sx)+(s-x)\sin(sx)-\exp\left(-\frac{s^2}2\right)x+s\exp\left(-\frac{s^2}2\right)(x^2-1),\quad x,s\in\R,
\end{equation*}
and following the same lines of proof as in \cite{BE:2020}, section 4, we can show the following result.
\begin{theorem}\label{thm:contalt}
Under the triangular array $X_{n,1}, \dots, X_{n,n}$, we have
\begin{equation*}
Z_n\cd \|W+\zeta\|^2_{L^2},
\end{equation*}
where $W$ is the limiting Gaussian process of Theorem \ref{thm:nulldist} and
\begin{equation*}
\zeta(s)=\int_{-\infty}^{\infty}\eta(x,s)c(x)f(x)\mbox{d}x, \quad s\in\R.
\end{equation*}

\end{theorem}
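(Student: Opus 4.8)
The plan is to invoke LeCam's third lemma, which is tailor-made for transporting a weak limit from the null measures $\mu_n$ to the contiguous alternatives $\nu_n$. The first reduction is to replace $W_n$ by the simpler process $\tilde W_n$: by Lemma \ref{lem:conviP} we have $\|W_n-\tilde W_n\|_{L^2}\cp 0$ under $\mu_n$, and since $\nu_n$ is contiguous to $\mu_n$, convergence in probability to zero is preserved under $\nu_n$. Hence it suffices to identify the limit law of $\tilde W_n$ under $\nu_n$ and then apply the continuous mapping theorem to the map $f\mapsto\|f\|_{L^2}^2$.

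The core step is to establish the joint weak convergence of the pair $(\tilde W_n,\Lambda_n)$ under $\mu_n$, where $\Lambda_n=\log(\mathrm{d}\nu_n/\mathrm{d}\mu_n)=\sum_{j=1}^n\log(1+c(X_{n,j})/\sqrt n)$ is the log-likelihood ratio and, under $\mu_n$, the $X_{n,j}$ are iid. standard normal. Using the boundedness of $c$ and a second-order Taylor expansion of the logarithm, together with $\int c\,f\,\mathrm{d}\mathcal L^1=0$ and the (Hilbert-space) central limit theorem, I would show under $\mu_n$ that
\begin{equation*}
\Lambda_n=\frac1{\sqrt n}\sum_{j=1}^n c(X_{n,j})-\frac12\tau^2+o_{\PP}(1),\qquad \tau^2=\int_{-\infty}^\infty c(x)^2 f(x)\,\mathrm{d}x,
\end{equation*}
so that $\Lambda_n\cd N(-\tau^2/2,\tau^2)$. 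Noting that $\tilde W_{n,j}(s)=\eta(X_{n,j},s)$, the pair $(\tilde W_{n,j},c(X_{n,j}))$ forms an iid. sequence of centred elements of $L^2\times\R$, so Corollary 10.9 in \cite{LT:1991} yields joint convergence of $(\tilde W_n,\tfrac1{\sqrt n}\sum_j c(X_{n,j}))$ to a jointly Gaussian limit $(W,N_0)$. The decisive computation is the cross-covariance
\begin{equation*}
\E\big[\tilde W_{n,1}(s)\,c(X_{n,1})\big]=\int_{-\infty}^\infty\eta(x,s)\,c(x)\,f(x)\,\mathrm{d}x=\zeta(s),
\end{equation*}
which is precisely the shift function appearing in the statement.

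With the joint Gaussian limit in hand, LeCam's third lemma asserts that under $\nu_n$ the process $\tilde W_n$ converges weakly in $L^2$ to $W+\zeta$; that is, the centred limit $W$ of Theorem \ref{thm:nulldist} acquires the deterministic mean shift $\zeta$ equal to its covariance with the linear part of the log-likelihood. Combining this with the first reduction and the continuous mapping theorem gives $Z_n=\|W_n\|_{L^2}^2\cd\|W+\zeta\|_{L^2}^2$ under $\nu_n$, as claimed. I expect the main obstacle to be the rigorous verification of the joint weak convergence in the product space $L^2\times\R$, in particular checking Bochner integrability of $\tilde W_{n,1}\,c(X_{n,1})$ and that the remainder in the log-likelihood expansion is $o_{\PP}(1)$, rather than the cross-covariance identity, which reduces to the same elementary Gaussian integrals already encountered in the kernel $K_Z$.
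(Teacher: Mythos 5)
Your proposal is correct and follows essentially the same route as the paper, which gives no self-contained argument but instead invokes LeCam's first lemma for contiguity and defers to the identical third-lemma machinery of Section 4 of \cite{BE:2020}: joint weak convergence of $(\tilde{W}_n,\Lambda_n)$ under $\mu_n$, the cross-covariance $\E[\eta(X_{n,1},s)c(X_{n,1})]=\zeta(s)$ as the mean shift, transfer to $\nu_n$ by contiguity, and the continuous mapping theorem. Your reduction from $W_n$ to $\tilde{W}_n$ under $\nu_n$ via contiguity and your identification of $\zeta$ are exactly the steps that reference carries out.
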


\section{Consistency and behaviour under fixed alternatives}\label{sec:ConsistFixed}
Let $X,X_1,X_2,\ldots$ be iid. random variables with $\E(X^4)<\infty$. Moreover, we assume $\E(X)=0$ and $\E(X^2)=1$, in view of affine invariance of the test statistic.
\begin{theorem}\label{thm:contalt}
As $n\rightarrow\infty$, we have
\begin{equation*}
\frac{Z_n}{n}\cp \int_{-\infty}^\infty \left|\E\left((iX+t)\exp(itX)\right)\right|^2w(t)\mbox{d}t=\Delta.
\end{equation*}
\end{theorem}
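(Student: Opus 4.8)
The plan is to show that $Z_n/n$, viewed as the squared $L^2$-norm of an empirical average, converges in probability to the squared norm of its population counterpart. Recall that, because $w$ is symmetric, $Z_n/n=\|V_n\|_{L^2}^2$, where
\[
V_n(t)=\frac1n\sum_{j=1}^n g(Y_{n,j},t),\qquad g(y,t)=(y+t)\cos(ty)+(t-y)\sin(ty).
\]
The point is that $g$ is exactly the sum of the real and imaginary parts of $(iy+t)\exp(ity)$, so that $V_n(t)=\mathrm{Re}\,\Phi_n(t)+\mathrm{Im}\,\Phi_n(t)$ with $\Phi_n(t)=\frac1n\sum_j(iY_{n,j}+t)\exp(itY_{n,j})$; the cross term $2\,\mathrm{Re}\,\Phi_n\,\mathrm{Im}\,\Phi_n$ is odd in $t$ and integrates to zero against the even weight $w$, which is precisely why $\|V_n\|_{L^2}^2=\int|\Phi_n|^2w=Z_n/n$. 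I would prove $\|V_n-v\|_{L^2}\cp0$ for the deterministic limit $v(t)=\E\,g(X,t)$, then invoke continuity of the norm to deduce $\|V_n\|_{L^2}^2\cp\|v\|_{L^2}^2$, and finish with the identification $\|v\|_{L^2}^2=\Delta$.

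For $\|V_n-v\|_{L^2}\cp0$ I would split $V_n-v=A_n+B_n$, where $B_n(t)=\frac1n\sum_jg(X_j,t)-\E\,g(X,t)$ is the genuine law-of-large-numbers term built from the raw observations and $A_n(t)=\frac1n\sum_j\{g(Y_{n,j},t)-g(X_j,t)\}$ is the error incurred by passing to the scaled residuals. Since $|g(y,t)|\le2(|y|+|t|)$, one has $\E\|g(X,\cdot)\|_{L^2}<\infty$ as soon as $\int(1+t^2)w<\infty$ and $\E(X^2)<\infty$; hence $g(X_1,\cdot),g(X_2,\cdot),\ldots$ are iid.\ integrable random elements of the separable Hilbert space $L^2$, and Mourier's strong law of large numbers in Hilbert spaces gives $\|B_n\|_{L^2}\fse0$, in particular $\|B_n\|_{L^2}\cp0$.

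The main obstacle is the residual term $A_n$. Here I would use that $\overline{X}_n\fse0$ and $S_n\fse1$ (classical strong law, using $\E X=0$ and $\E X^2=1$) together with the smoothness of $g$ in its first argument. By Jensen's inequality $\|A_n\|_{L^2}^2\le\frac1n\sum_j\|g(Y_{n,j},\cdot)-g(X_j,\cdot)\|_{L^2}^2$, and a mean value expansion in $y$ yields $g(Y_{n,j},t)-g(X_j,t)=\partial_yg(\xi_{n,j},t)(Y_{n,j}-X_j)$ with $\xi_{n,j}$ between $X_j$ and $Y_{n,j}$ and $|\partial_yg(\xi,t)|\le C(1+|\xi|\,|t|+t^2)$. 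Writing $Y_{n,j}-X_j=(X_j(1-S_n)-\overline{X}_n)/S_n$ and bounding the intermediate points by $C(1+|X_j|)$ on the events $\{S_n>1/2,\ |\overline{X}_n|\le1\}$ (whose probability tends to $1$), one collects, after integrating the derivative bound against $w$,
\[
\|A_n\|_{L^2}^2\le C\big((1-S_n)^2+\overline{X}_n^2\big)\cdot\frac1n\sum_{j=1}^n\big(1+X_j^2+X_j^4\big).
\]
The empirical average on the right converges a.s.\ to $1+\E X^2+\E X^4<\infty$ by the strong law, which is exactly where the assumption $\E(X^4)<\infty$ is used, while $(1-S_n)^2+\overline{X}_n^2\cp0$; the weight moments $\int t^2w$ and $\int t^4w$ hidden in $C$ are finite for the Gaussian weight $w_a$. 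Hence $\|A_n\|_{L^2}\cp0$.

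Combining the two steps gives $\|V_n-v\|_{L^2}\cp0$, so $Z_n/n=\|V_n\|_{L^2}^2\cp\|v\|_{L^2}^2$, and it remains to identify the limit. With $\gamma(t)=\E((iX+t)\exp(itX))$ we have $v(t)=\mathrm{Re}\,\gamma(t)+\mathrm{Im}\,\gamma(t)$, and a direct computation shows $\gamma(-t)=-\overline{\gamma(t)}$, so that $\mathrm{Re}\,\gamma$ is odd and $\mathrm{Im}\,\gamma$ is even in $t$. Consequently the cross term in $v^2=|\gamma|^2+2\,\mathrm{Re}\,\gamma\,\mathrm{Im}\,\gamma$ is odd and integrates to zero against the even weight $w$, leaving $\|v\|_{L^2}^2=\int_{-\infty}^\infty|\gamma(t)|^2w(t)\,\mathrm{d}t=\Delta$, as claimed.
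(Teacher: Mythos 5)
Your proof is correct, and it follows the same overall strategy as the paper (replace the scaled residuals by the raw observations, apply the strong law of large numbers in a separable Hilbert space, and identify the limit via the symmetry of $w$), but the key technical step is handled by a genuinely different decomposition. The paper passes through the intermediate Taylor-linearized process $W_n^*$ in two stages: it first asserts $\|n^{-1/2}(W_n-W_n^*)\|_{L^2}\cp0$ ``by the same reasoning as in the proof of Lemma \ref{lem:conviP}'' (a lemma whose proof is written under $H_0$), and then controls $n^{-1/2}(W_n^*-W_n^0)$ explicitly via the functions $\Psi(\cdot,X_j)$ and the Banach-space law of large numbers. You instead bound the substitution error $A_n(t)=\frac1n\sum_j\{g(Y_{n,j},t)-g(X_j,t)\}$ in one step by a mean-value estimate in the first argument of $g$, using $Y_{n,j}-X_j=(X_j(1-S_n)-\overline{X}_n)/S_n$ and a uniform polynomial bound on the intermediate points on an event of probability tending to one. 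This buys a self-contained argument valid directly under the alternative, with the role of the moment assumptions made explicit: $\E(X^4)<\infty$ (the section's standing assumption) enters through $\frac1n\sum_j X_j^4$, and the finiteness of $\int t^2w$ and $\int t^4w$ (automatic for $w_a$) is needed for the $L^2$-integrability of the derivative bound. The paper's route is shorter on the page and reuses machinery needed anyway for Lemma \ref{lem:convW}, at the cost of an appeal to an $H_0$-proof that must be re-examined under the alternative; your $t$-dependent intermediate point $\xi_{n,j}$ should strictly be written $\xi_{n,j}(t)$, but since it always lies between $X_j$ and $Y_{n,j}$ your uniform bound is unaffected. Your symmetry argument identifying $\|v\|_{L^2}^2$ with $\Delta$ (via $\gamma(-t)=-\overline{\gamma(t)}$) correctly fills in what the paper leaves as ``some calculations.''
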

\begin{proof}
Let
\begin{equation*}
W_n^0(t)=\frac1{\sqrt{n}}\sum_{j=1}^n(X_j+t)\cos(tX_j)+(t-X_j)\sin(tX_j),\quad t\in\R.
\end{equation*}
In this setting, we still have $(\overline{X}_n,S_n)\cp(0,1)$ and hence we can apply the same reasoning as in the proof of Lemma \ref{lem:conviP} to show that
\begin{equation*}\label{eq:consis1}
\left\|n^{-1/2}(W_n-W^*_n)\right\|_{L^2}\cp0.
\end{equation*}
Next, we consider (for the definition of $\Psi(\cdot,\cdot)$ see Appendix \ref{app:prooflem1})
\begin{eqnarray*}
n^{-1/2}(W_n^*(t)-W^0_n(t))&=&\frac1n\sum_{j=1}^n\left((tX_j-(t^2+1))\cos(tX_j)+(tX_j+(t^2+1))\sin(tX_j)\right)\overline{X}_n\\
&&+X_j\left((tX_j-(t^2+1))\cos(tX_j)+(tX_j+(t^2+1))\sin(tX_j)\right)(S_n-1)\\
&=&\overline{X}_n\frac1n\sum_{j=1}^n\Psi(t,X_j)+(S_n-1)\frac1n\sum_{j=1}^nX_j\Psi(t,X_j).
\end{eqnarray*}
By the triangle in inequality, we have
\begin{equation}\label{eq:consis2}
\left\|n^{-1/2}(W_n^*-W^0_n)\right\|^2_{L^2}\le2\left|\overline{X}_n\right|^2\left\|\frac1n\sum_{j=1}^n\Psi(\cdot,X_j)\right\|^2_{L^2}+2|S_n-1|^2\left\|\frac1n\sum_{j=1}^nX_j\Psi(\cdot,X_j)\right\|^2_{L^2}.
\end{equation}
By the law of large number in Banach spaces and $(\overline{X}_n,S_n)\cp(0,1)$, the right hand side of (\ref{eq:consis2}) converges to zero in probability. Note that the expectations exist due to the existence of the first two derivatives of the characteristic function of $X$, which is implied by $\E(X^2)<\infty$. Again, by the law of large number in Banach spaces, we have
\begin{equation*}
n^{-1/2}W_n^0(t)\fse \E\left[(X+t)\cos(tX)+(t-X)\sin(tX))\right]
\end{equation*}
in $L^2$. In view of (\ref{eq:consis1}), (\ref{eq:consis2}), and the symmetry of the weight function $w(\cdot)$, some calculations give
\begin{eqnarray*}
\frac{Z_n}{n}=\left\|n^{-1/2}W_n\right\|^2_{L^2}\cp\int_{-\infty}^\infty \left|\E\left[(X+t)\cos(tX)+(t-X)\sin(tX))\right]\right|^2w(t)\mbox{d}t=\Delta.
\end{eqnarray*}
\end{proof}
Notice that, if $g(t)=\E(\exp(itX))$ denotes the characteristic function of $X$, we have $\Delta=0$ if and only if $g=\varphi$, which is shown by the unique solution of the initial value problem (\ref{AWP}). This implies that $Z_n\cp\infty$ for any alternative with existing second moment. Thus we conclude that the test based on $Z_n$ is consistent against each such alternative.

To derive further asymptotic results, we follow the methodology in \cite{BEH:2017}. Put $W^\bullet_n(\cdot)=n^{-1/2}W_n(t)$ and $z(t)=\E\left[(X+t)\cos(tX)+(t-X)\sin(tX))\right]$, we then have
\begin{eqnarray} \nonumber
\sqrt{n}\left( \frac{Z_{n}}{n} - \Delta \right) & = & \sqrt{n} \left( \|W^\bullet_n\|^2_{L^2} - \|z\|^2_{L^2} \right) = \sqrt{n} \langle W^\bullet_n -z,W^\bullet_n+z \rangle_{L^2} \\ \nonumber
& = &  \sqrt{n} \langle W^\bullet_n -z,2z + W^\bullet_n-z \rangle_{L^2}\\ \label{eq:zerlegung}
& = & 2 \langle \sqrt{n}(W^\bullet_n -z),z \rangle_{L^2} + \frac{1}{\sqrt{n}} \| \sqrt{n}\left(W^\bullet_n - z\right)\|^2_{L^2}.
\end{eqnarray}
The following structural Lemma is needed in the subsequent derivations and is proved in Appendix \ref{app:proofconvW}.
\begin{lemma}\label{lem:convW}
We have
\begin{equation*}
\sqrt{n}(W^\bullet_n -z)\cd \mathcal{W},
\end{equation*}
in $L^2$, where $\mathcal{W}$ is a centred Gaussian process in $L^2$ with covariance kernel
\begin{eqnarray*}
K_W(s,t)&=&\E\left[(st+X^2)\cos((s-t)X)+(st-X^2)X\sin((s+t))\right]\\
&&+\E\left[((s-t)\sin((s-t)X)+(s+t)X\cos((s+t)X))\right]\\
&&+\frac12\left[\E\left[(X+s)(b(t)(X^2-1)+2a(t)X)\cos(sX)\right]+\E\left[(b(s)(X^2-1)+2a(s)X)(t+X)\cos(tX)\right]\right.\\
&&\left.+\E\left[(b(t)(X^2-1)+2a(t)X)(s-X)\sin(sX)\right]+\E\left[(b(s)X^2+2a(s)X-b(s))(t-X)\sin(tX)\right]\right]\\
&&+\frac14\left[b(t)\E(X^4)+2a(t)\E(X^3)-b(t)\right]b(s)+\frac12a(s)(b(t)\E(X^3)+2a(t))-z(s)z(t),\quad s,t\in\R,
\end{eqnarray*}
where
\begin{equation*}
a(t)=\E(\Psi(t,X))\quad\mbox{and}\quad b(t)=\E(X\Psi(t,X)).
\end{equation*}
\end{lemma}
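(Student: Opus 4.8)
The plan is to establish a stochastic linearization
$$
\sqrt{n}\bigl(W^\bullet_n - z\bigr) = \frac{1}{\sqrt{n}}\sum_{j=1}^n \Xi_j + o_P(1)\quad\text{in } L^2,
$$
in which $\Xi_1,\Xi_2,\ldots$ are iid centred elements of $L^2$, and then to invoke the central limit theorem in Hilbert spaces (Corollary 10.9 in \cite{LT:1991}) exactly as in the proof of Theorem \ref{thm:nulldist}. Writing $h(x,t)=(x+t)\cos(tx)+(t-x)\sin(tx)$, so that $z(t)=\E[h(X,t)]$ and $W^\bullet_n(t)=n^{-1}\sum_{j=1}^n h(Y_{n,j},t)$, the target summand is
$$
\Xi_j(t)=h(X_j,t)-z(t)+a(t)X_j+\tfrac12 b(t)(X_j^2-1),\quad t\in\R,
$$
which is centred since $\E X=0$ and $\E X^2=1$. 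The limiting kernel is then $K_W(s,t)=\E[\Xi_1(s)\Xi_1(t)]$, and it remains to produce this representation and identify the covariance.

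First I would reproduce the expansion already used in the consistency proof: a first-order Taylor expansion of $h(Y_{n,j},\cdot)$ about $X_j$, using $\partial_x h(x,t)=-\Psi(t,x)$ and $Y_{n,j}-X_j=-(\overline{X}_n+X_j(S_n-1))/S_n$, gives
$$
\sqrt{n}\bigl(W^\bullet_n - z\bigr)(t)=\frac{1}{\sqrt{n}}\sum_{j=1}^n\bigl(h(X_j,t)-z(t)\bigr)+\sqrt{n}\,\overline{X}_n\,\frac1n\sum_{j=1}^n\Psi(t,X_j)+\sqrt{n}(S_n-1)\,\frac1n\sum_{j=1}^nX_j\Psi(t,X_j)+r_n(t).
$$
By the law of large numbers in $L^2$ the two slope averages converge to $a(\cdot)$ and $b(\cdot)$, while $\sqrt{n}\,\overline{X}_n=n^{-1/2}\sum_j X_j$ and, by a delta-method argument based on $S_n^2-1=n^{-1}\sum_j(X_j^2-1)-\overline{X}_n^2$ together with $\sqrt n\,\overline X_n^2=o_P(1)$, one gets $\sqrt{n}(S_n-1)=n^{-1/2}\sum_j\tfrac12(X_j^2-1)+o_P(1)$. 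Substituting these linear representations converts the three leading terms into $n^{-1/2}\sum_j\Xi_j$, up to errors I must show are negligible in $L^2$.

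The covariance identification is then a direct, if lengthy, computation. Expanding $\E[\Xi_1(s)\Xi_1(t)]$ yields $\E[h(X,s)h(X,t)]$, the two cross terms $\E[h(X,s)(a(t)X+\tfrac12 b(t)(X^2-1))]$ and its $s\leftrightarrow t$ mirror, the quadratic term $\E[(a(s)X+\tfrac12 b(s)(X^2-1))(a(t)X+\tfrac12 b(t)(X^2-1))]$, and $-z(s)z(t)$. Product-to-sum trigonometric identities turn the first expectation into the two leading lines of $K_W$; the cross terms reproduce the lines carrying the factor $\tfrac12$ after writing $2(a(t)X+\tfrac12 b(t)(X^2-1))=b(t)(X^2-1)+2a(t)X$; and the identities $\E X^2=1$, $\E[X(X^2-1)]=\E X^3$, $\E[(X^2-1)^2]=\E X^4-1$ yield the line with the factor $\tfrac14$. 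This last step is where the assumption $\E X^4<\infty$ enters, guaranteeing that $K_W$ is finite and that $\mathcal W$ is a genuine Gaussian element of $L^2$.

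The main obstacle will be controlling $r_n$ and the substitution errors at the refined $\sqrt{n}$ scale in the $L^2$-norm, since here second-order accuracy is needed rather than the crude $o_P(\sqrt n)$ control of the consistency proof. There are three sources to bound: the second-order Taylor remainder, which is $O((Y_{n,j}-X_j)^2)$ with $(Y_{n,j}-X_j)^2$ of order $n^{-1}$ times a polynomial in $X_j$, hence $O_P(n^{-1/2})$ after the prefactor $n^{-1/2}\sum_j$, provided the weighted $L^2$-norm of the second $x$-derivative of $h$ admits a moment-integrable majorant; the replacement of each slope average by its limit, handled by writing
$$
\sqrt{n}\,\overline{X}_n\,\frac1n\sum_{j=1}^n\Psi(\cdot,X_j)=\bigl(\sqrt{n}\,\overline{X}_n\bigr)a+\bigl(\sqrt{n}\,\overline{X}_n\bigr)\Bigl(\frac1n\sum_{j=1}^n\Psi(\cdot,X_j)-a\Bigr),
$$
whose second summand is an $O_P(1)$-times-$o_P(1)$ product in $L^2$ because the centred slope average tends to $0$ in $L^2$ while $\sqrt n\,\overline X_n=O_P(1)$; and the analogous delta-method error for the $S_n$ term. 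Each estimate relies on the fourth-moment assumption to bound the arising sums. Once these are in place, Slutsky's lemma together with the Hilbert-space CLT applied to $n^{-1/2}\sum_j\Xi_j$ completes the proof.
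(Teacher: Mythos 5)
Your proposal is correct and follows essentially the same route as the paper: the paper defines $\tilde{W}_n^\bullet(t)=n^{-1/2}\sum_{j}\bigl(h(X_j,t)+a(t)X_j+\tfrac12 b(t)(X_j^2-1)\bigr)$, shows $\|W_n-\tilde W_n^\bullet\|_{L^2}\cp 0$ by the same arguments as in Lemma \ref{lem:conviP}, and applies the Hilbert-space CLT with $K_W=\mathrm{Cov}(\tilde W_1^\bullet(s),\tilde W_1^\bullet(t))$, which is exactly your linearization with summands $\Xi_j$ followed by the covariance computation. Your write-up merely makes explicit the Taylor/delta-method error control that the paper delegates to the earlier lemma.
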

Lemma \ref{lem:convW} shows that $\sqrt{n}\left(W^\bullet_n - z\right)$ is a tight sequence in $L^2$, thus we see by Slutzky's Lemma and $\frac{1}{\sqrt{n}} \| \sqrt{n}\left(W^\bullet_n - z\right)\|^2_{L^2}\cp0$ that the limit distribution of $\sqrt{n}\left( \frac{Z_{n}}{n} - \Delta \right)$ in (\ref{eq:zerlegung}) only depends on \linebreak$2 \langle \sqrt{n}(W^\bullet_n -z),z \rangle_{L^2}$. A direct application of Theorem 1 in \cite{BEH:2017} and Lemma \ref{lem:convW} yields the following result.

\begin{theorem}\label{thm:fixedalt}
Under the stated assumptions, we have
\begin{equation*}
\sqrt{n}\left( \frac{Z_{n}}{n} - \Delta \right)\cd N(0,\tau^2),
\end{equation*}
where
\begin{equation*}
\tau^2=4\int_{-\infty}^\infty \int_{-\infty}^\infty K_W(s,t)z(s)z(t)w(s)w(t)\,{\mbox{d}}s\mbox{d}t.
\end{equation*}
\end{theorem}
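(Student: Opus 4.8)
The plan is to read the result off the decomposition (\ref{eq:zerlegung}) together with Lemma \ref{lem:convW}, viewing the right-hand side of (\ref{eq:zerlegung}) as an asymptotically normal linear term plus a negligible quadratic remainder. Abbreviate $V_n=\sqrt{n}(W^\bullet_n-z)$, so that (\ref{eq:zerlegung}) reads
\[
\sqrt{n}\Big(\tfrac{Z_n}{n}-\Delta\Big)=2\langle V_n,z\rangle_{L^2}+\tfrac{1}{\sqrt{n}}\|V_n\|^2_{L^2}.
\]
By Lemma \ref{lem:convW} we have $V_n\cd\mathcal{W}$ in $L^2$, so it suffices to show that the linear term converges in distribution to $N(0,\tau^2)$ while the quadratic term vanishes in probability, and then to combine the two by Slutzky's Lemma.

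First I would dispose of the remainder. The weak convergence $V_n\cd\mathcal{W}$ forces $(V_n)$ to be tight in $L^2$, and the continuous mapping theorem applied to the continuous map $h\mapsto\|h\|^2_{L^2}$ gives $\|V_n\|^2_{L^2}\cd\|\mathcal{W}\|^2_{L^2}$; in particular $\|V_n\|^2_{L^2}=O_{\PP}(1)$, whence $\tfrac{1}{\sqrt{n}}\|V_n\|^2_{L^2}\cp0$.

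For the linear term I would treat $\Phi(h)=2\langle h,z\rangle_{L^2}$ as a bounded linear functional on $L^2$; it is continuous because $z\in L^2$, which holds since $z$ is precisely the $L^2$-limit identified in the consistency result of this section and $\Delta=\|z\|^2_{L^2}<\infty$. The continuous mapping theorem applied to $V_n\cd\mathcal{W}$ then yields $2\langle V_n,z\rangle_{L^2}=\Phi(V_n)\cd\Phi(\mathcal{W})=2\langle\mathcal{W},z\rangle_{L^2}$. Since $\mathcal{W}$ is a centred Gaussian random element of $L^2$, its image under the continuous linear functional $\Phi$ is a centred real Gaussian variable, and computing its variance by Fubini from the covariance kernel $K_W(s,t)=\E(\mathcal{W}(s)\mathcal{W}(t))$ gives
\[
\mathrm{Var}\big(2\langle\mathcal{W},z\rangle_{L^2}\big)=4\int_{-\infty}^\infty\int_{-\infty}^\infty K_W(s,t)z(s)z(t)w(s)w(t)\,\mathrm{d}s\,\mathrm{d}t=\tau^2 .
\]
Thus $2\langle V_n,z\rangle_{L^2}\cd N(0,\tau^2)$, and this linear-functional-of-a-Gaussian-limit step is exactly what Theorem 1 in \cite{BEH:2017} provides. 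Combining with $\tfrac{1}{\sqrt{n}}\|V_n\|^2_{L^2}\cp0$ via Slutzky's Lemma completes the proof.

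The genuinely hard work is already absorbed into Lemma \ref{lem:convW}: the weak convergence of $V_n$ and the form of $K_W$ must account for the random standardization through $\overline{X}_n$ and $S_n$ (the origin of the $a(\cdot),b(\cdot)$ terms) and rely on $\E(X^4)<\infty$. Granting that lemma, the remaining care is purely technical — confirming $z\in L^2$ so that $\Phi$ is continuous, confirming $\int K_W(t,t)w(t)\,\mathrm{d}t<\infty$ so that $\mathcal{W}$ indeed lives in $L^2$, and verifying $\int\!\int|K_W(s,t)z(s)z(t)|w(s)w(t)\,\mathrm{d}s\,\mathrm{d}t<\infty$ to license the Fubini interchange in the variance computation; all three follow from the standing moment and weight assumptions.
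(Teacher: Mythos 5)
Your argument is correct and follows the paper's proof essentially verbatim: the same decomposition (\ref{eq:zerlegung}), Lemma \ref{lem:convW} to kill the quadratic remainder via tightness and Slutzky, and the identification of the limit of the linear term as a centred Gaussian with variance $\tau^2$ — the paper simply delegates this last step to Theorem 1 of \cite{BEH:2017}, whose content (a continuous linear functional applied to the Gaussian limit) you have spelled out explicitly.
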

In principle, one can calculate for a fixed alternative the explicit version of $K_W$ and $z$ and finally $\tau^2$. For most of the alternatives this will be too complicated, thus we suggest to estimate $\tau^2$ by a consistent estimator $\widehat{\tau}_n^2$. Corollary 1 in \cite{BEH:2017} then states that
\begin{equation*}
\frac{\sqrt{n}}{\widehat{\tau}_n}\left( \frac{Z_{n}}{n} - \Delta \right)\cd N(0,1),
\end{equation*}
which opens grounds to applications as suggested in Section 3 in \cite{BEH:2017}, i.e., computation of an asymptotic confidence interval for $\Delta$, approximation of the power function or neighborhood-of-model validation. For specific examples of the needed methodology, see Section 4 of \cite{BEH:2017}, or \cite{BE:2020,DEH:2019} respectively.

\section{Approximation of the limit null distribution}\label{sec:ApproxCV}
In this section we follow the methodology in \cite{H:1990} to approximate the critical values of the asymptotic level $\alpha$ test based on $Z_n$ by exploiting the covariance structure of the limiting centred Gaussian process of Theorem \ref{thm:nulldist}. Let $Z_{\infty,a}=\|W\|_{L^2}$ be the random variable with the limit null distribution of $Z_n$ in Corollary \ref{cor:nulldistS}. Hence $W$ is the random element in Theorem \ref{thm:nulldist} and the weight function $w_a(t)=\exp\left(-at^2\right),$ $t\in\R,$ of Section \ref{Intro} is used. By the results of Corollary \ref{cor:nulldistS} it is well-known that the limiting null distribution of $Z_{n,a}$ is given by the infinite series
\begin{equation*}
Z_{\infty,a}=\sum_{j=1}^\infty \lambda_j(a)Y_j^2.
\end{equation*}
Here, $Y_1,Y_2,\ldots$ being independent $N(0,1)$ distributed random variables and $(\lambda_j(a))_{j\ge1}$ is the decreasing sequence of the positive eigenvalues of the integral operator
\begin{equation*}
\mathcal{K}f(s)=\int_{-\infty}^\infty K_Z(s,t)f(t)w_a(t)\,\mbox{d}t,
\end{equation*}
on $L^2$. Notice that $\mathcal{K}$ depends solely on the covariance kernel $K_Z$ of Theorem \ref{thm:nulldist} and the weight function $w_a(\cdot)$. It seems hopeless to obtain closed-form expressions for the eigenvalues $\lambda_j$, hence we derive the first four moments of $Z_{\infty,a}$ in order to fit the Pearson system of distributions, see \cite{JKB:1994}, chapter 12, section 4.1. The $m$-th cumulant $\kappa_m(a)$ of $Z_{\infty,a}$ is
\begin{equation*}
\kappa_m(a)=2^{m-1}(m-1)!\int_{-\infty}^\infty h_m(t,t)w_a(t)\,\mbox{d}t,
\end{equation*}
where $h_1(s,t)=K_S(s,t)$ and
\begin{equation*}
h_m(s,t)=\int_{-\infty}^\infty h_{m-1}(s,u)K_Z(u,t)w_a(u)\,\mbox{d}u,\quad m\ge2.
\end{equation*}
The formulae for the first three cumulants are (the computations were performed by using the computer algebra system Maple 2019, \cite{Maple2019})
\begin{equation*}
\kappa_1(a)=-1/2\,{\frac {\sqrt {\pi} \left( 2\,{a}^{5/2}-2\,\sqrt {a+1}{a}^{2}+4\,{a}^{3/2}-3\,\sqrt {a+1}a-\sqrt {a+1} \right) }{ \left( a+1 \right)^{3/2}{a}^{3/2}}},
\end{equation*}
\begin{eqnarray*}
\kappa_2(a)&=&\pi\left( a+2 \right) ^{-2} \left( a+3/2 \right) ^{-2} \left( a+1 \right)
^{-3} \left( a+1/2 \right) ^{-2}\cdot\ldots\\
&&\cdot\left(9+66a+{\frac {27}{32\sqrt {a+2}{a}^{5/2}}}+2{
\frac {{a}^{17/2}}{\sqrt {a+2}}}+22{\frac {{a}^{15/2}}{
\sqrt {a+2}}}+{\frac {215{a}^{13/2}}{2\sqrt {a+2}}}+{\frac {
615{a}^{11/2}}{2\sqrt {a+2}}}+{\frac {13045{a}^{3/2}}{
32\sqrt {a+2}}}\right.\\
&&\left.+{\frac {719\sqrt {a}}{4\sqrt {a+2}}}+{
\frac {429}{8\sqrt {a+2}\sqrt {a}}}+{\frac {20729{a}^{5/
2}}{32\sqrt {a+2}}}+{\frac {4575{a}^{9/2}}{8\sqrt {a+2}}}+{
\frac {5817{a}^{7/2}}{8\sqrt {a+2}}}+{\frac {315}{32
\sqrt {a+2}{a}^{3/2}}}\right.\\
&&\left.-{\frac {416a}{\sqrt {4{a}^{2}+8a+3}}}-{\frac {1295{a}^{2}}{\sqrt {4{a}^{2}+8a+3}}}-{
\frac {2377{a}^{3}}{\sqrt {4{a}^{2}+8a+3}}}-{\frac {2835{a
}^{4}}{\sqrt {4{a}^{2}+8a+3}}}-{\frac {2277{a}^{5}}{\sqrt {
4{a}^{2}+8a+3}}}\right.\\
&&\left.+{\frac {799{a}^{2}}{4}}+{\frac {1323
{a}^{3}}{4}}+{\frac {2681{a}^{4}}{8}}+216{a}^{5}+87
{a}^{6}+20{a}^{7}+2{a}^{8}-{\frac {60}{\sqrt {
4{a}^{2}+8a+3}}}\right.\\
&&\left.-{\frac {1230{a}^{6}}{\sqrt {4{a}^{2}+8
a+3}}}-{\frac {430{a}^{7}}{\sqrt {4{a}^{2}+8a+3}}}-{
\frac {88{a}^{8}}{\sqrt {4{a}^{2}+8a+3}}}-{\frac {8{a}^{
9}}{\sqrt {4{a}^{2}+8a+3}}}
\right)
\end{eqnarray*}
and
\begin{eqnarray*}
\kappa_3(a)&=&16\pi^{3/2}(a^2+2a+1/2)^{-3}(a+1/2)^{-2}(a+3/2)^{-3}(4a^2+8a+3)^{-1/2}(2a+3)^{-1/2}\cdot\ldots\\
&&\cdot(2a^2+4a+1)^{-1/2}(a+1)^{-9/2}a^{-7/2}\frac {\sqrt {2{a}^{2}+4a+1}}{4096}\cdot\ldots\\
&&\cdot \left( -5935392\sqrt {2
a+3}\sqrt {4{a}^{2}+8a+3} \left( {a}^
{21/2}+{\frac {58384{a}^{23/2}}{61827}}+{\frac {124090{a}^{{\frac{
25}{2}}}}{185481}}+{\frac {64900}{185481}{a}^{{\frac{27}{2}}}}+{\frac
{24256}{185481}{a}^{{\frac{29}{2}}}}\right.\right.\\
&&\left.+{\frac {6112}{185481}{a}^{{\frac{31}{2}}}}
+{\frac {928}{185481}{a}^{{\frac{33}{2}}}}+{\frac {64}{185481
}{a}^{{\frac{35}{2}}}}+{\frac {3{a}^{7/2}}{41218}}+{\frac {117{a}^
{9/2}}{82436}}+{\frac {1007{a}^{11/2}}{82436}}+{\frac {45691{a}^{
13/2}}{741924}}+{\frac {37834{a}^{15/2}}{185481}}\right.\\
&&\left.+{\frac {175337{a}^{17/2}}{370962}}+{\frac {49256{a}^{19/2}}{61827}} \right) + \sqrt {2a+3}\left( 4320{a}^{7/2}+12288{a}^{{
\frac{37}{2}}}+190464{a}^{{\frac{35}{2}}}+1354752{a}^{{\frac{33}{2
}}}+5872128{a}^{{\frac{31}{2}}}\right.\\
&&\left.\left.+17370624{a}^{{\frac{29}{2}}}+
37211904{a}^{{\frac{27}{2}}}+73348224{a}^{23/2}+69467904{a}^{21/
2}+50733120{a}^{19/2}+28261824{a}^{17/2}\right.\right.\\
&&\left.\left.+11732544{a}^{15/2}+
3487008{a}^{13/2}+694176{a}^{11/2}+82080{a}^{9/2}+59750400{a}^
{{\frac{25}{2}}} \right)\right.\\
&&\left. +4096 \left( a+1/2 \right) ^{
3} \left( a+1 \right) ^{9/2}\sqrt {2a+1} \left( {a}^{2}+2a+1/2
 \right) ^{3} \left( {a}^{5}+4{a}^{4}+11/2{a}^{3}+{\frac {27{a}^{2}}{8}}+{\frac {21a}{16}}+{\frac{15}{32}} \right)  \right)\\
&& -\frac {4772427\sqrt {4{a}^{2}+8a+3}\sqrt {2}\sqrt {2a+3}}{512}
 \left( {a}^{21/2}+{\frac {1627996{a}^{23/2}}{1590809}}+{\frac {
1297344{a}^{{\frac{25}{2}}}}{1590809}}+{\frac {795456}{1590809}{a}^{
{\frac{27}{2}}}}+{\frac {367232}{1590809}{a}^{{\frac{29}{2}}}}\right.\\
&&\left.+{\frac
{11200}{144619}{a}^{{\frac{31}{2}}}}+{\frac {1664}{93577}{a}^{{\frac{
33}{2}}}}+{\frac {3968}{1590809}{a}^{{\frac{35}{2}}}}+{\frac {256}{
1590809}{a}^{{\frac{37}{2}}}}+{\frac {189{a}^{7/2}}{1156952}}+{
\frac {30321{a}^{9/2}}{12726472}}+{\frac {207081{a}^{11/2}}{
12726472}}\right.\\
&&\left.+{\frac {881383{a}^{13/2}}{12726472}}+{\frac {327552{a}^
{15/2}}{1590809}}+{\frac {1442591{a}^{17/2}}{3181618}}+{\frac {
2429741{a}^{19/2}}{3181618}} \right).
\end{eqnarray*}
The formula for $\kappa_4(a)$ can be found in Appendix \ref{app:4cum} from the first four cumulants we can approximate the distribution of $Z_{\infty,a}$ by a member of the Pearson system of distributions, since
\begin{equation*}
\E(Z_{\infty,a})=\kappa_1(a)\quad\mbox{and}\quad \mbox{Var}(Z_{\infty,a})=\kappa_2(a),
\end{equation*}
as well as the parameters of skewness and kurtosis of $Z_{\infty,a}$ are given by
\begin{equation*}
\sqrt{\beta_1}(a)=\frac{\kappa_3(a)}{(\kappa_2(a))^{3/2}}\quad\mbox{and}\quad \beta_2(a)=3+\frac{\kappa_4(a)}{(\kappa_2(a))^2}.
\end{equation*}

\begin{table}[t]
\centering
\begin{tabular}{l|rrrr}

 $a$ & $\E(Z_{\infty,a})$ & Var$(Z_{\infty,a})$ & $\sqrt{\beta_1}(a)$ & $\beta_2(a)$ \\
  \hline
  0.1 & 30.4036 & 304.1938 & 1.4542 & 6.4513 \\
  0.25 & 7.7811 & 31.2928 & 1.7549 & 7.8821 \\
  0.5 & 2.6013 & 4.7153 & 1.9576 & 8.9907 \\
  0.75 & 1.3056 & 1.3821 & 2.0799 & 9.7885 \\
  1 & 0.7787 & 0.5430 & 2.1780 & 10.4822 \\
  3 & 0.0861 & 0.0094 & 2.5812 & 13.3852 \\
  5 & 0.0277 & 0.0011 & 2.7053 & 14.2265 \\
  10 & 0.0055 & 0.0001 & 2.7885 & 14.7597
\end{tabular}
\caption{Values of mean, variance, skewness and kurtosis of $Z_{\infty,a}$ rounded to 4 decimals}\label{tab:MVSK}
\end{table}
These values can directly be used in packages that implement the Pearson system, for concrete values see Table \ref{tab:MVSK}. In the statistical computing language \texttt{R}, see \cite{R:2019}, we will use the package \texttt{PearsonDS}, see \cite{BK:2017}, to approximate critical values, see Table \ref{tab:cv}, and $p$-values of the corresponding tests.

\section{Simulations}\label{sec:Simu}
This section presents results of a comparative finite sample power simulation study. The study is designed to match and complement  the counterparts in \cite{DEH:2019},
Section 7, and in \cite{BE:2020}, Section 6, since we take exactly the same setting with regard to sample size, nominal level of significance
and selected alternative distributions. In this way, we facilitate the comparison with existing procedures, even with some procedures not covered here.
We consider sample sizes $n\in\{20,50,100\}$ and fix the nominal level of significance throughout all simulations to 0.05. All simulations are performed using the statistical computing environment {\tt R}, see \cite{R:2019}. We simulated empirical critical values under $H_0$ for $Z_{n,a}$ with 100~000 replications, see Table \ref{tab:cv}. The row segment entitled '$Z_{\infty,a}$' gives approximations by the method described in Section \ref{sec:ApproxCV}. Each entry in Table \ref{tab:ER} was simulated with 10~000 replications, and an asterisk '*' denotes a perfect rejection rate of 100\%.

\begin{table}[t]
\centering
\begin{tabular}{c|l|rrrrrr}
 Test & $a\backslash q$& 0.01 & 0.05 & 0.1 & 0.9 & 0.95 & 0.99 \\
  \hline
  & 0.1 & 6.86169 & 10.51119 & 13.04177 & 53.02253 & 62.95695 & 84.99265 \\
  & 0.25 & 1.04896 & 1.85667 & 2.48811 & 15.17277 & 18.74151 & 26.51209 \\
  & 0.5 & 0.21479 & 0.42809 & 0.61130 & 5.43755 & 6.81869 & 9.96010 \\
  $Z_{20,a}$ & 0.75 & 0.07838 & 0.16585 & 0.24721 & 2.75949 & 3.53893 & 5.41328 \\
  & 1 & 0.03616 & 0.08152 & 0.12623 & 1.63633 & 2.14516 & 3.40884 \\
  & 3 & 0.00136 & 0.00467 & 0.00866 & 0.17785 & 0.25076 & 0.44301 \\
  & 5 & 0.00030 & 0.00120 & 0.00225 & 0.05674 & 0.08146 & 0.14904 \\
  & 10 & 0.00004 & 0.00017 & 0.00032 & 0.01116 & 0.01631 & 0.03051 \\
\hline
 &0.1 & 6.44914 & 9.96704 & 12.49225 & 53.05553 & 63.40399 & 86.79261 \\
 &0.25 & 0.99974 & 1.79193 & 2.40350 & 15.05561 & 18.79428 & 26.90751 \\
 &0.5 & 0.20190 & 0.42203 & 0.60621 & 5.40910 & 6.89826 & 10.24374 \\
 $Z_{50,a}$ &0.75 & 0.07554 & 0.16688 & 0.25100 & 2.79329 & 3.59782 & 5.50480 \\
  &1 & 0.03526 & 0.08298 & 0.12909 & 1.69165 & 2.20391 & 3.46662 \\
  &3 & 0.00144 & 0.00464 & 0.00847 & 0.19527 & 0.27076 & 0.46805 \\
  &5 & 0.00030 & 0.00116 & 0.00221 & 0.06399 & 0.09031 & 0.16034 \\
  &10 & 0.00004 & 0.00017 & 0.00033 & 0.01292 & 0.01859 & 0.03359 \\
\hline
 & 0.1 & 6.35269 & 9.91258 & 12.40696 & 53.26156 & 63.85351 & 87.88282 \\
 & 0.25 & 0.99012 & 1.75319 & 2.36455 & 15.13173 & 18.81316 & 27.23384 \\
 & 0.5 & 0.20292 & 0.41909 & 0.60298 & 5.43381 & 6.88596 & 10.38987 \\
 $Z_{100,a}$& 0.75  & 0.07478 & 0.16796 & 0.25341 & 2.80970 & 3.63695 & 5.56701 \\
 & 1 & 0.03583 & 0.08448 & 0.13175 & 1.70456 & 2.23136 & 3.47844 \\
 & 3 & 0.00149 & 0.00478 & 0.00876 & 0.20069 & 0.27509 & 0.45774 \\
 & 5 & 0.00031 & 0.00118 & 0.00228 & 0.06636 & 0.09233 & 0.15819 \\
 & 10 & 0.00004 & 0.00017 & 0.00034 & 0.01354 & 0.01923 & 0.03358 \\
 \hline
 &0.1 & 6.89295 & 9.89596 & 12.27245 & 53.39952 & 63.92766 & 87.89731 \\
 & 0.25 & 1.29920 & 1.83683 & 2.35713 & 15.10009 & 18.73029 & 27.15089 \\
 & 0.5 & 0.32955 & 0.45838 & 0.60581 & 5.41750 & 6.89193 & 10.35260 \\
$Z_{\infty,a}$&  0.75 & 0.13650 & 0.19046 & 0.25773 & 2.81741 & 3.63395 & 5.57065 \\
  & 1 & 0.07292 & 0.10059 & 0.13743 & 1.71902 & 2.23934 & 3.48445 \\
  &3 & 0.00826 & 0.00931 & 0.01142 & 0.20558 & 0.27903 & 0.45910 \\
  &5 & 0.00254 & 0.00274 & 0.00322 & 0.06843 & 0.09441 & 0.15828 \\
  & 10 & 0.00038 & 0.00041 & 0.00048 & 0.01414 & 0.01980 & 0.03371 \\
\end{tabular}
\caption{Empirical quantiles of $Z_{n,a}$ for $n=20,50,100$ (100000 replications) and approximation of the quantiles of $Z_{\infty,a}$ by a Pearson family}\label{tab:cv}
\end{table}

The following alternatives are considered: symmetric distributions, like the Student t$_{\nu}$-distribution with $\nu \in \{3, 5, 10\}$ degrees of freedom, as well as the uniform distribution U$(-\sqrt{3}, \sqrt{3})$, and asymmetric distributions, such as the $\chi^2_{\nu}$-distribution with $\nu \in \{5, 15\}$ degrees of freedom,
the beta distributions B$(1, 4)$ and B$(2, 5)$, and the gamma distributions $\Gamma(1, 5)$ and $\Gamma(5, 1)$, both parametrized by their shape and rate parameter, the Gumbel distribution Gum$(1, 2)$ with location parameter 1
and scale parameter 2, the Weibull distribution W$(1, 0.5)$ with scale parameter 1 and shape parameter 0.5, and the lognormal distribution LN$(0, 1)$.  As representatives of bimodal distributions,  we simulate the mixture of  normal distributions
NMix$(p, \mu, \sigma^2)$, where the random variables are generated by $(1 - p) \, {\rm N}(0, 1) + p \, {\rm N}(\mu, \sigma^2)$, $p \in (0, 1)$, $\mu \in \R$, $\sigma > 0$. Note that these alternatives can also be found in the simulation studies
presented in \cite{BE:2020,DEH:2019a,DEH:2019,RDC:2010}. We chose these alternatives in order to ease the comparison with many other existing tests.

The considered competing test statistics are the following:
\begin{itemize}
\item the Anderson-Darling test, see \cite{AD:1952},
\item the Shapiro-Wilk test, see \cite{SW:1965},
\item the Jarque-Bera test, see \cite{JB:1980},
\item the Henze-Visagie test, see \cite{HV:2019},
\item the Betsch-Ebner test, see \cite{BE:2020},
\item the BHEP test, see \cite{HW:1997},
\item the BCMR test, see \cite{DBCMRR:1999}.
\end{itemize}
Note that these tests are very strong competitors as witnessed by extensive simulation studies, see \cite{RDC:2010}.

We used the implementation of the Anderson-Darling (AD) test in the package \texttt{nortest} from \cite{GL:2015} and the implementation of the Shapiro-Wilk (SW) test from the \texttt{stats} package. The Jarque-Bera (JB) test was implemented in the package \texttt{tseries}, see \cite{TH:2019}. The Henze-Visagie (HV) test uses a weighted $L^2$-type statistic based on a characterization of the moment generating function
that similarly as the newly proposed test employs a first-order differential equation. The univariate statistic is defined by
\begin{equation*}
	{\rm HV}_\gamma= \sqrt{\frac{\pi}{\gamma}} \frac{1}{n}\sum_{j,k = 1}^n \exp\left( \frac{(Y_{n,j} + Y_{n,k})^2}{4 \gamma} \right)\left(Y_{n,j}Y_{n,k}+(Y_{n,j} + Y_{n,k})^2\left(\frac{1}{4 \gamma^2}-\frac{1}{2 \gamma}\right)+\frac{1}{2\gamma}\right),
\end{equation*}
where $\gamma>2$. In what follows, we consider three different tuning parameters $\gamma\in\{2.5,5,10\}$. Simulated critical values can be found in the arXiv version of \cite{HV:2019}. The Betsch-Ebner (BE) test is based on a $L^2$-distance between the empirical zero-bias transformation and the empirical distribution function. By the same fixed point argument, this distance is minimal under normality. The statistic is given by
\begin{eqnarray*} \label{explicit formula stat1}
	{\rm BE} & = &  \frac{2}{n} \sum\limits_{1 \leq j < k \leq n} \left\{ \vphantom{\exp\left(- \tfrac{Y_{(k)}^2}{2 a}\right)} \left( 1 - \Phi\left( \tfrac{Y_{(k)}}{\sqrt{a}} \right) \right) \left( (Y_{(j)}^2 - 1)(Y_{(k)}^2 - 1) + a Y_{(j)} Y_{(k)} \right)  \right.\\&&\left.+ \frac{a}{\sqrt{2 \pi a}} \, \exp\left(- \tfrac{Y_{(k)}^2}{2 a}\right) \left( - Y_{(j)}^2 Y_{(k)} + Y_{(k)} + Y_{(j)} \right)
	\right\} \\
	& & + \frac{1}{n} \sum\limits_{j=1}^{n} \left\{ \vphantom{\frac{a}{\sqrt{2 \pi a}}} \left( 1 - \Phi\left(\tfrac{Y_{n,j}}{\sqrt{a}}\right) \right) \left( Y_{n,j}^4 + (a - 2) Y_{n,j}^2 + 1 \right)  + \frac{a}{\sqrt{2 \pi a}} \, \exp\left(- \tfrac{Y_{n,j}^2}{2 a}\right) \left( 2 Y_{n,j} - Y_{n,j}^3 \right) \right\},
\end{eqnarray*}
where $Y_{(1)} \leq \dotso \leq Y_{(n)}$ are the order statistics of the scaled residuals $Y_{n,1},\ldots,Y_{n,n}$, and $\Phi(\cdot)$ stands for the distribution function of the standard normal law. The parameter $a>0$ and the corresponding critical values were chosen by the algorithm presented in \cite{BE:2020}.

Tests based on the empirical characteristic function are represented by the Baringhaus-Henze-Epps-Pulley (BHEP) test, see \cite{BH:1988,EP:1983}. The univariate BHEP test with tuning parameter $\beta>0$ uses the test statistic
\begin{eqnarray*}
	{\rm BHEP}
	&=& \frac{1}{n} \sum_{j,k=1}^{n} \exp\left( - \frac{\beta^2}{2} \big(Y_{n,j}-Y_{n,k}\big)^2 \right) - \frac{2}{\sqrt{1+\beta^2}} \sum_{j=1}^{n} \exp\left( - \frac{\beta^2}{2(1+\beta^2)} \, Y_{n,j}^2 \right) + \frac{n}{\sqrt{1+2\beta^2}}.
\end{eqnarray*}
We fix $\beta = 1$ and took the critical values from \cite{H:1990}. Furthermore, we include the quantile correlation test of del Barrio-Cuesta-Albertos-M\'{a}tran-Rodr\'{i}guez-Rodr\'{i}guez (BCMR), based on the $L^2$-Wasserstein distance, see \cite{Betal:2000},  section 3.3, and \cite{DBCMRR:1999}. The BCMR statistic is given by
\begin{equation*}
	{\rm BCMR} =
	n \left( 1 - \frac{1}{S_n^2} \left( \sum_{k=1}^n X_{(k)} \int_{\frac{k-1}{n}}^{\frac{k}{n}} \Phi^{-1}(t) \, \mathrm{d}t \right)^2 \right) - \int_{\frac{1}{n+1}}^{\frac {n}{n+1}} \frac{t (1 - t)}{\left( \varphi\left( \Phi^{-1}(t) \right) \right)^2} \, \mathrm{d}t,
\end{equation*}
where $X_{(k)}$ is the $k$-th order statistic of $X_1,\ldots,X_n$, $S_n^2$ is the sample variance, and $\Phi^{-1}(\cdot)$ is the quantile function of the standard normal distribution. Simulated critical values can be found in \cite{K:2009}.

\begin{table}[t]
\small
	\setlength{\tabcolsep}{.8mm}
\centering
\begin{tabular}{cl|cccccccc|ccc|cccccc}
  & & \multicolumn{8}{c}{$Z_{n,a}$} & \multicolumn{3}{c}{HV$_\gamma$} & & & & & & \\
  Alt. & $n\backslash a$ & 0.1 & 0.25 & 0.5 & 0.75 & 1 & 3 & 5 & 10 & 2.5 & 5 & 10 & \rm{SW} & \rm{ BCMR} & \rm{ BHEP} & \rm{ AD} & \rm{JB} & \rm{BE}\\
  \hline
\multirow{3}{*}{N$(0,1)$} & 20 & 5 & 5 & 4 & 4 & 4 & 5 & 5 & 5 & 5 & 5 & 5 & 5 & 5 & 5 & 5 & 2 & 5\\
  &50 & 5 & 5 & 5 & 5 & 5 & 5 & 5 & 5 & 5 & 5 & 5 & 5 & 5 & 5 & 5 & 4 & 5\\
  &100 & 5 & 5 & 5 & 5 & 5 & 5 & 5 & 5 & 5 & 5 & 5 & 5 & 5 & 5 & 5 & 4 & 5\\
  \hline
 \multirow{3}{*}{NMix$(0.3,1,0.25)$} & 20 & 25 & 28 & 27 & 24 & 23 & 18 & 17 & 17 & 11 & 13 & 15 & 29 & 29 & 29 & 30 & 7 & 25\\
 & 50 & 61 & 65 & 62 & 59 & 57 & 45 & 41 & 38 & 15 & 24 & 31 & 60 & 60 & 62 & 68 & 19 & 56 \\
 & 100 & 92 & 93 & 91 & 89 & 86 & 76 & 72 & 68 & 23 & 45 & 56 & 88 & 88 & 90 & 94 & 50 & 86 \\
 \multirow{3}{*}{NMix$(0.5,1,4)$} & 20 & 35 & 42 & 42 & 41 & 40 & 33 & 32 & 32 & 32 & 32 & 31 & 39 & 41 & 41 & 46 & 25 & 34\\
 & 50 & 78 & 85 & 83 & 80 & 77 & 59 & 53 & 48 & 49 & 50 & 46 & 77 & 78 & 79 & 86 & 59 & 52 \\
 & 100 & 98 & 99 & 99 & 98 & 97 & 86 & 77 & 66 & 68 & 68 & 63 & 98 & 98 & 98 & 99 & 87 & 75\\
 \hline
  \multirow{3}{*}{$t_3$} & 20 & 19 & 27 & 33 & 35 & 36 & 36 & 36 & 36 & 39 & 38 & 37 & 35 & 37 & 34 & 34 & 32 & 30\\
 & 50 & 37 & 52 & 61 & 64 & 65 & 62 & 61 & 58 & 66 & 64 & 60 & 64 & 66 & 62 & 61 & 67 & 41\\
 & 100 & 63 & 79 & 85 & 87 & 87 & 84 & 82 & 78 & 85 & 84 & 80 & 87 & 88 & 86 & 85 & 89 & 54\\
 \multirow{3}{*}{$t_5$} & 20 & 8 & 12 & 16 & 18 & 19 & 20 & 20 & 21 & 22 & 22 & 22 & 19 & 20 & 18 & 17 & 17 & 16\\
 & 50 & 14 & 22 & 30 & 34 & 35 & 36 & 35 & 34 & 40 & 39 & 36 & 36 & 38 & 31 & 30 & 39 & 22\\
 & 100 & 23 & 38 & 48 & 52 & 54 & 53 & 50 & 47 & 59 & 57 & 51 & 56 & 58 & 50 & 48 & 63 & 27\\
 \multirow{3}{*}{$t_{10}$} & 20 & 6 & 6 & 8 & 9 & 10 & 11 & 11 & 11 & 12 & 12 & 11 & 10 & 10 & 9 & 9 & 8 & 9\\
 & 50 & 6 & 8 & 11 & 13 & 14 & 16 & 16 & 16 & 20 & 19 & 18 & 15 & 17 & 12 & 11 & 18 & 11\\
 & 100 & 8 & 12 & 16 & 19 & 20 & 23 & 22 & 21 & 29 & 27 & 24 & 24 & 25 & 18 & 16 & 29 & 11\\
 \hline
  \multirow{3}{*}{U$(-\sqrt{3},\sqrt{3})$} & 20 & 18 & 21 & 15 & 8 & 4 & 1 & 1 & 1 & 0 & 0 & 0 & 20 & 17 & 13 & 17 & 0 & 4\\
 & 50 & 47 & 59 & 59 & 50 & 36 & 2 & 1 & 0 & 0 & 0 & 0 & 75 & 69 & 54 & 57 & 0 & 3\\
 & 100 & 87 & 94 & 95 & 94 & 91 & 8 & 1 & 0 & 0 & 0 & 0 & * & 99 & 94 & 95 & 57 & 5\\
 \hline
  \multirow{3}{*}{$\chi^2_5$} & 20 & 18 & 28 & 36 & 40 & 41 & 40 & 39 & 40 & 32 & 35 & 38 & 43 & 43 & 42 & 38 & 24 & 44\\
 & 50 & 44 & 66 & 78 & 82 & 84 & 85 & 84 & 83 & 62 & 74 & 79 & 89 & 88 & 83 & 80 & 68 & 84\\
 & 100 & 80 & 95 & 98 & 99 & 99 & 99 & 99 & 99 & 89 & 97 & 98 & * & * & 99 & 99 & 97 & 99\\
 \multirow{3}{*}{$\chi^2_{15}$} & 20 & 7 & 11 & 15 & 16 & 17 & 18 & 18 & 18 & 16 & 17 & 18 & 18 & 18 & 17 & 15 & 11 & 18\\
 & 50 & 13 & 23 & 33 & 39 & 42 & 45 & 45 & 45 & 31 & 37 & 42 & 43 & 43 & 40 & 35 & 31 & 44\\
 & 100 & 21 & 42 & 59 & 67 & 71 & 76 & 77 & 76 & 50 & 65 & 72 & 74 & 74 & 68 & 61 & 60 & 74\\
 \hline
  \multirow{3}{*}{B$(1,4)$} & 20 & 32 & 41 & 47 & 48 & 48 & 43 & 41 & 41 & 27 & 34 & 38 & 59 & 58 & 52 & 51 & 20 & 49\\
 & 50 & 78 & 88 & 92 & 93 & 94 & 91 & 89 & 87 & 51 & 73 & 81 & 99 & 98 & 94 & 95 & 67 & 90\\
 & 100 & 99 & * & * & * & * & * & * & * & 84 & 98 & 99 & * & * & * & * & 99 & *\\
 \multirow{3}{*}{B$(2,5)$} & 20 & 9 & 12 & 14 & 15 & 14 & 13 & 13 & 13 & 9 & 11 & 12 & 17 & 17 & 17 & 15 & 5 & 15\\
 & 50 & 18 & 31 & 39 & 42 & 43 & 39 & 37 & 36 & 14 & 22 & 29 & 50 & 48 & 44 & 39 & 15 & 40\\
 & 100 & 38 & 63 & 75 & 79 & 81 & 79 & 76 & 73 & 23 & 51 & 64 & 90 & 89 & 80 & 76 & 51 & 73\\
 \hline
  \multirow{3}{*}{$\Gamma(1,5)$} & 20 & 55 & 66 & 73 & 74 & 75 & 71 & 70 & 69 & 54 & 62 & 66 & 83 & 82 & 77 & 77 & 47 & 76\\
 & 50 & 96 & 99 & 99 & 99 & * & 99 & 99 & 99 & 90 & 96 & 98 & * & * & * & * & 96 & 99\\
 & 100 & * & * & * & * & * & * & * & * & * & * & * & * & * & * & * & * & *\\
 \multirow{3}{*}{$\Gamma(5,1)$} & 20 & 9 & 14 & 19 & 22 & 23 & 23 & 24 & 24 & 20 & 22 & 23 & 23 & 24 & 23 & 20 & 14 & 25\\
 & 50 & 18 & 34 & 47 & 53 & 56 & 59 & 59 & 59 & 40 & 49 & 54 & 59 & 58 & 54 & 48 & 42 & 58\\
 & 100 & 35 & 63 & 79 & 84 & 87 & 90 & 90 & 89 & 65 & 81 & 86 & 90 & 90 & 85 & 81 & 78 & 88\\
 \hline
  \multirow{3}{*}{W$(1,0.5)$} & 20 & 56 & 68 & 74 & 75 & 76 & 72 & 71 & 70 & 56 & 63 & 67 & 84 & 83 & 78 & 78 & 49 & 76\\
 & 50 & 96 & 99 & 99 & * & * & 99 & 99 & 99 & 90 & 97 & 98 & * & * & * & * & 96 & 99\\
 & 100 & * & * & * & * & * & * & * & * & * & * & * & * & * & * & * & * & *\\
 \multirow{3}{*}{Gum$(1,2)$} & 20 & 12 & 19 & 26 & 29 & 31 & 31 & 32 & 32 & 27 & 29 & 31 & 31 & 31 & 31 & 27 & 20 & 32\\
 & 50 & 24 & 44 & 58 & 65 & 68 & 71 & 70 & 70 & 53 & 62 & 67 & 68 & 69 & 65 & 60 & 55 & 70\\
 & 100 & 47 & 76 & 87 & 91 & 93 & 95 & 95 & 95 & 80 & 90 & 93 & 94 & 94 & 92 & 89 & 89 & 94\\
 \multirow{3}{*}{LN$(0,1)$} & 20 & 76 & 84 & 88 & 90 & 90 & 88 & 87 & 87 & 77 & 82 & 85 & 93 & 93 & 91 & 90 & 72 & 90\\
 & 50 & 99 & * & * & * & * & * & * & * & 99 & * & * & * & * & * & * & * & *\\
 & 100 & * & * & * & * & * & * & * & * & * & * & * & * & * & * & * & * & *
\end{tabular}
\caption{Empirical rejection rates of $Z_{n,a}$ and competitors ($\alpha=0.05$, 10~000 replications).}\label{tab:ER}
\end{table}

The results presented in Table \ref{tab:ER} show that the power of $Z_{n,a}$ depends on the choice of the tuning parameter $a>0$. In most cases one is able to find a value of $a$ in which the tests are nearly as good or better than the competitors. Note that for higher values of $a$ $Z_{n,a}$ performs best for the $\chi^2_{15}$, the $\Gamma(5,1)$ and the Gum$(1,2)$ distribution. Very interesting is the behaviour of the HV-test for the uniform U$(-\sqrt{3},\sqrt{3})$, where it fails to detect the alternative for any value of $\gamma$ for any $n$. Another interesting comparison can be made for this uniform distribution between $Z_{n,a}$ and the BE-test if one also takes Table 3 of \cite{BE:2020} into consideration, since it seems that even though both procedures are based on the zero bias transform, $Z_{n,a}$ seems to attain higher power for some values of $a$, while the BE-test seems to be much less sensitive to the actual choice of $a$. The AD-test performs best for the normal mixture distributions, while the overall the SW-test has a strong power for most asymmetric distributions.

Depending of the nature of the alternatives, we would suggest to use $a=0.25$ for symmetric alternatives and $a=3$ for asymmetric alternatives for performing the test. If nothing is known about the nature of the alternative, we suggest to use $a=1$, as it seems to have a good overall power performance. Naturally, it would be interesting to implement a data driven choice of the tuning parameter as suggested by \cite{AS:2015} and corrected in \cite{T:2019}, but we leave this pronlem open for further research.

\section{Conclusions}\label{sec:conc}
We have proposed a new family of tests for normality based on the fixed point property of the zero bias transformation and its corresponding characteristic function. These tests are universally consistent under weak moment conditions and show a remarkable power performance in comparison to the strongest time-honored tests of normality. Weak convergence results under the null hypothesis, under contiguous and fixed alternatives were derived, which open ground to further insights on the behaviour of the tests.

Finally, we point out some open problems concerning the test statistic. A first step to further investigation, would be to derive a consistent estimator of the limiting variance $\tau^2$ in Theorem \ref{thm:fixedalt}. The approximation of the eigenvalues connected to the limiting random element $Z_{\infty,a}$ would give some further insight to approximate Bahadur efficiency statements and the structure of the initial value problem gives hope to extend the procedure to the multivariate case.

\section*{Acknoledgement}
We thank Norbert Henze for numerous suggestions that led to an improvement of the paper.

\bibliographystyle{abbrv}
\bibliography{lit_ZBCF_univ}  %%% Remove comment to use the external .bib file (using bibtex).

\begin{thebibliography}{10}

\bibitem{AS:2015}
J.~Allison and L.~Santana.
\newblock On a data-dependent choice of the tuning parameter appearing in
  certain goodness-of-fit tests.
\newblock {\em Journal of Statistical Computation and Simulation},
  85(16):3276--3288, 2015.

\bibitem{AD:1952}
T.~W. Anderson and D.~A. Darling.
\newblock Asymptotic theory of certain ''goodness of fit'' criteria based on
  stochastic processes.
\newblock {\em The Annals of Mathematical Statistics}, 23(2):193--212, 1952.

\bibitem{BDH:1989}
L.~Baringhaus, R.~Danschke, and N.~Henze.
\newblock Recent and classical tests for normality - a comparative study.
\newblock {\em Communications in Statistics - Simulation and Computation},
  18(1):363--379, 1989.

\bibitem{BEH:2017}
L.~Baringhaus, B.~Ebner, and N.~Henze.
\newblock The limit distribution of weighted {$L^2$}-goodness-of-fit statistics
  under fixed alternatives, with applications.
\newblock {\em Annals of the Institute of Statistical Mathematics},
  69(5):969--995, 2017.

\bibitem{BH:1988}
L.~Baringhaus and N.~Henze.
\newblock A consistent test for multivariate normality based on the empirical
  characteristic function.
\newblock {\em Metrika}, 35(1):339--348, 1988.

\bibitem{BK:2017}
M.~Becker and S.~Kl\"{o}{\ss}ner.
\newblock {\em PearsonDS: Pearson Distribution System}, 2017.
\newblock R package version 1.1.

\bibitem{BE:2020}
S.~Betsch and B.~Ebner.
\newblock Testing normality via a distributional fixed point property in the
  {S}tein characterization.
\newblock {\em TEST}, 29(1):105–138, 2020.

\bibitem{BS:1975}
K.~O. Bowman and L.~R. Shenton.
\newblock {Omnibus test contours for departures from normality based on
  $\sqrt{b1}$ and b2}.
\newblock {\em Biometrika}, 62(2):243--250, 1975.

\bibitem{CGS:2011}
L.~H.~Y. Chen, L.~Goldstein, and Q.-M. Shao.
\newblock {\em Normal approximation by Steins method}.
\newblock Probability and its applications. Springer, Berlin, 2011.

\bibitem{Betal:2000}
E.~del Barrio, J.~A. Cuesta-Albertos, C.~Matr{\'a}n, S.~Cs{\"o}rg{\"o}, C.~M.
  Cuadras, T.~de~Wet, E.~Gin{\'e}, R.~Lockhart, A.~Munk, and W.~Stute.
\newblock Contributions of empirical and quantile processes to the asymptotic
  theory of goodness-of-fit tests.
\newblock {\em TEST}, 9(1):1--96, 2000.

\bibitem{DBCMRR:1999}
E.~del Barrio, J.~A. Cuesta-Albertos, C.~Matran, and J.~M. Rodriguez-Rodriguez.
\newblock Tests of goodness of fit based on the ${L}_2$-{W}asserstein distance.
\newblock {\em The Annals of Statistics}, 27(4):1230--1239, 1999.

\bibitem{DEH:2019a}
P.~D\"{o}rr, B.~Ebner, and N.~Henze.
\newblock A new test of multivariate normality by a double estimation in a
  characterizing pde.
\newblock {\em arXiv}, page 1–16, 2019.

\bibitem{DEH:2019}
P.~D\"{o}rr, B.~Ebner, and N.~Henze.
\newblock Testing multivariate normality by zeros of the harmonic oscillator in
  characteristic function spaces.
\newblock {\em arXiv}, page 1–29, 2019.

\bibitem{EP:1983}
T.~W. Epps and L.~B. Pulley.
\newblock {A test for normality based on the empirical characteristic
  function}.
\newblock {\em Biometrika}, 70(3):723--726, 1983.

\bibitem{FRS:2006}
P.~J. Farrell and K.~Rogers-Stewart.
\newblock Comprehensive study of tests for normality and symmetry: extending
  the {S}piegelhalter test.
\newblock {\em Journal of Statistical Computation and Simulation},
  76(9):803--816, 2006.

\bibitem{GR:1997}
L.~Goldstein and G.~Reinert.
\newblock Stein's method and the zero bias transformation with application to
  simple random sampling.
\newblock {\em The Annals of Applied Probability}, 7(4):935--952, 1997.

\bibitem{GL:2015}
J.~Gross and U.~Ligges.
\newblock {\em nortest: Tests for Normality}, 2015.
\newblock R package version 1.0-4.

\bibitem{H:1990}
N.~Henze.
\newblock An approximation to the limit distribution of the {E}pps-{P}ulley
  test statistic for normality.
\newblock {\em Metrika}, 37(1):7--18, 1990.

\bibitem{H:1994}
N.~Henze.
\newblock Tests of normality (in {G}erman).
\newblock {\em Allgemeines statistisches Archiv, Journal of the German
  Statistical Society}, 78(3):293--317, 1994.

\bibitem{H:2002}
N.~Henze.
\newblock Invariant tests for multivariate normality: a critical review.
\newblock {\em Statistical Papers}, 43(4):467--506, 2002.

\bibitem{HJG:2019}
N.~Henze and M.~D. Jim{\'e}nez-Gamero.
\newblock A test for gaussianity in {H}ilbert spaces via the empirical
  characteristic functional.
\newblock {\em ArXiv}, 1910.10924:1--14, 2019.

\bibitem{HV:2019}
N.~Henze and J.~Visagie.
\newblock Testing for normality in any dimension based on a partial
  differential equation involving the moment generating function.
\newblock {\em Annals of the Institute of Statistical Mathematics}, 2019.

\bibitem{HW:1997}
N.~Henze and T.~Wagner.
\newblock A new approach to the {BHEP} tests for multivariate normality.
\newblock {\em Journal of Multivariate Analysis}, 62(1):1 -- 23, 1997.

\bibitem{JB:1980}
C.~M. Jarque and A.~K. Bera.
\newblock Efficient tests for normality, homoscedasticity and serial
  independence of regression residuals.
\newblock {\em Economics Letters}, 6(3):255 -- 259, 1980.

\bibitem{JKB:1994}
N.~L. Johnson, S.~Kotz, and N.~Balakrishnan.
\newblock {\em Continuous univariate distributions}, volume~1.
\newblock Wiley, New York, 2. edition, 1994.

\bibitem{KC:2019}
J.~Kellner and A.~Celisse.
\newblock A one-sample test for normality with kernel methods.
\newblock {\em Bernoulli}, 25(3):1816--1837, 2019.

\bibitem{K:2001}
B.~Klar.
\newblock Goodness-of-fit tests for the exponential and the normal distribution
  based on the integrated distribution function.
\newblock {\em Annals of the Institute of Statistical Mathematics},
  53(2):338--353, 2001.

\bibitem{K:2009}
{\'E}.~Krauczi.
\newblock A study of the quantile correlation test for normality.
\newblock {\em TEST}, 18(1):156--165, 2009.

\bibitem{LL:1992}
L.~Landry and Y.~Lepage.
\newblock Empirical behavior of some tests for normality.
\newblock {\em Communications in Statistics - Simulation and Computation},
  21(4):971--999, 1992.

\bibitem{LT:1991}
M.~Ledoux and M.~Talagrand.
\newblock {\em Probability in Banach spaces : isoperimetry and processes}.
\newblock Springer, Berlin, 1991.

\bibitem{L:1967}
H.~W. Lilliefors.
\newblock On the {K}olmogorov-{S}mirnov test for normality with mean and
  variance unknown.
\newblock {\em Journal of the American Statistical Association},
  62(318):399--402, 1967.

\bibitem{L:1970}
E.~Lukacs.
\newblock {\em Characteristic functions}.
\newblock Griffin, London, 2. edition, 1970.

\bibitem{Maple2019}
Maplesoft.
\newblock {\em Maple~2019}.

\bibitem{PDB:1977}
E.~S. Pearson, R.~B. D'Agostino, and K.~O. Bowman.
\newblock Tests for departure from normality: Comparison of powers.
\newblock {\em Biometrika}, 64(2):231--246, 1977.

\bibitem{R:2019}
{R Core Team}.
\newblock {\em R: A Language and Environment for Statistical Computing}.
\newblock R Foundation for Statistical Computing, Vienna, Austria, 2019.

\bibitem{RDC:2010}
X.~{Rom\~{a}o}, R.~Delgado, and A.~Costa.
\newblock An empirical power comparison of univariate goodness-of-fit tests for
  normality.
\newblock {\em Journal of Statistical Computation and Simulation},
  80(5):545--591, 2010.

\bibitem{SW:1965}
S.~S. Shapiro and M.~B. Wilk.
\newblock An analysis of variance test for normality (complete samples).
\newblock {\em Biometrika}, 52(3/4):591--611, 1965.

\bibitem{SWC:1968}
S.~S. Shapiro, M.~B. Wilk, and H.~J. Chen.
\newblock A comparative study of various tests for normality.
\newblock {\em Journal of the American Statistical Association},
  63(324):1343--1372, 1968.

\bibitem{S:2013}
I.~G. Shevtsova.
\newblock A square bias transformation of probability distributions: Some
  properties and applications.
\newblock {\em Doklady Mathematics}, 88(1):388--390, 2013.

\bibitem{TAA:2019}
M.~Tavakoli, N.~Arghami, and M.~a. Abbasnejad.
\newblock A goodness of fit test for normality based on
  {B}alakrishnan-{S}anghvi information.
\newblock {\em Journal of the Iranian Statistical Society}, 18(1), 2019.

\bibitem{T:2019}
C.~Tenreiro.
\newblock On the automatic selection of the tuning parameter appearing in
  certain families of goodness-of-fit tests.
\newblock {\em Journal of Statistical Computation and Simulation},
  89(10):1780--1797, 2019.

\bibitem{TH:2019}
A.~Trapletti and K.~Hornik.
\newblock {\em tseries: Time Series Analysis and Computational Finance}, 2019.
\newblock R package version 0.10-47.

\bibitem{V:1976}
O.~Vasicek.
\newblock A test for normality based on sample entropy.
\newblock {\em Journal of the Royal Statistical Society. Series B
  (Methodological)}, 38(1):54--59, 1976.

\bibitem{YS:2011}
B.~W. Yap and C.~H. Sim.
\newblock Comparisons of various types of normality tests.
\newblock {\em Journal of Statistical Computation and Simulation},
  81(12):2141--2155, 2011.

\end{thebibliography}
%%% and comment out the ``thebibliography'' section.

\begin{appendix}
\section{Proofs}
\subsection{Proof of Lemma \ref{lem:conviP}}\label{app:prooflem1}
\begin{proof}
Let for $t,x\in\R$
\begin{equation*}
\Psi_{t,x}(\mu,\sigma)=\frac{x-\mu}{\sigma}\left(\cos\left(t\frac{x-\mu}{\sigma}\right)-\sin\left(t\frac{x-\mu}{\sigma}\right)\right)+t\left(\cos\left(t\frac{x-\mu}{\sigma}\right)+\sin\left(t\frac{x-\mu}{\sigma}\right)\right),
\end{equation*}
and notice that a first order multivariate Taylor approximation around $(\mu_0,\sigma_0)=(0,1)$ gives
\begin{equation*}
\Psi_{t,x}(\mu,\sigma)=\Psi_{t,x}(0,1)+\frac{\partial\Psi_{t,x}(0,1)}{\partial \mu}\mu+\frac{\partial\Psi_{t,x}(0,1)}{\partial \sigma}(\sigma-1),
\end{equation*}
and note that higher order terms involve terms of higher power of $\mu$ and $\sigma$. With that notation and $(\overline{X}_n,S_n)\cp(0,1)$ we have
\begin{equation*}
W_n(t)=\frac1{\sqrt{n}}\sum_{j=1}^n\Psi_{t,X_j}(\overline{X}_n,S_n)=W^*_n(t)+o_{\mathbb{P}}(1),\quad t\in\R,
\end{equation*}
and hence $\|W_n-W^*_n\|_{L^2}=o_{\mathbb{P}}(1)$ by application of the triangle inequality, Slutzky's Lemma and the central limit theorem in $L^2$ implying the boundedness in probability of mixed remainder terms.
Next, note that for $X_1\sim N(0,1)$ and $t\in \R$, we have by symmetry
\begin{equation*}
\E(\sin(tX_1))=\E(X_1\cos(tX_1))=\E(X_1^2\sin(tX_1))=0
\end{equation*}
and using the standard normal characteristic function $\varphi(t)=\E(\cos(tX_1))=\exp(-t^2/2)$ and its derivatives, we have
\begin{equation*}
\E(X_1\sin(tX_1))=t\exp(-t^2/2),\quad\mbox{and}\quad \E(X_1^2\cos(tX_1))=(1-t^2)\exp(-t^2/2).
\end{equation*}
Let
\begin{equation*}
\Psi(t,x)=(tx-(t^2+1))\cos(tx)+(tx+(t^2+1))\sin(tx),
\end{equation*}

Hence, we have for $t\in\R$
\begin{eqnarray*}
\E\left(\Psi(t,X_1)\right)&=&-\exp(-t^2/2)=-\varphi(t),\\
\E\left(X_1\Psi(t,X_1))\right)&=&2t\exp(-t^2/2)=-2\varphi'(t).
\end{eqnarray*}
Now, it is easy to see that
\begin{equation*}
\left\|\frac1n\sum_{j=1}^n\Psi(\cdot,X_j)+\varphi(\cdot)\right\|_{L^2}=o_{\mathbb{P}}(1)\quad\mbox{and}\quad\left\|\frac1n\sum_{j=1}^nX_j\Psi(\cdot,X_j)+2\varphi'(\cdot)\right\|_{L^2}=o_{\mathbb{P}}(1).
\end{equation*}
Since
\begin{equation*}
\sqrt{n} \, (S_n-1) = \frac{1}{\sqrt{n}} \sum\limits_{j=1}^{n} \frac{1}{2} \left(X_j^2-1 \right) + o_\mathbb{P}(1) ,
\end{equation*}
we have
\begin{eqnarray*}
&&\|W_n^*-\tilde{W}_n\|^2_{L^2}\\
&=&\int_{-\infty}^\infty\left|\frac1{\sqrt{n}}\sum_{j=1}^n\Psi(t,X_1)\overline{X}_n+X_j\Psi(t,X_j)(S_n-1)+\varphi(t)\overline{X}_n+\varphi'(t)(X_j^2-1)\right|^2w(t)\mbox{d}t\\
&\le&\int_{-\infty}^\infty\left|\left(\frac1{n}\sum_{j=1}^n\Psi(t,X_1)+\varphi(t)\right)\frac1{\sqrt{n}}\sum_{l=1}^nX_l+\left(\frac1{n}\sum_{j=1}^nX_j\Psi(t,X_j)+\varphi'(t)\right)\frac1{\sqrt{n}}\sum_{l=1}^n(X_l^2-1)\right|^2w(t)\mbox{d}t+o_\mathbb{P}(1)\\
&\le&2\left\{\left\|\frac1n\sum_{j=1}^n\Psi(\cdot,X_j)+\varphi(\cdot)\right\|_{L^2}^2\left(\frac1{\sqrt{n}}\sum_{l=1}^nX_l\right)^2+\left\|\frac1n\sum_{j=1}^nX_j\Psi(\cdot,X_j)+2\varphi'(\cdot)\right\|_{L^2}^2\left(\frac1{\sqrt{n}}\sum_{l=1}^n(X_l^2-1)\right)^2\right\}+o_\mathbb{P}(1)
\end{eqnarray*}
and since $n^{-1/2}\sum_{l=1}^nX_l$ and $n^{-1/2}\sum_{l=1}^n(X_l^2-1)$ are tight sequences, the result follows by Slutsky's Lemma.
\end{proof}
\subsection{Proof of Lemma \ref{lem:convW}}\label{app:proofconvW}
\begin{proof}
Set
\begin{equation*}
\tilde{W}_n^\bullet(t)=\frac{1}{\sqrt{n}}\sum_{j=1}^n(X_j+t)\cos(tX_j)+(t-X_j)\sin(tX_j)+\E(\Psi(t,X))X_j+\frac12\E(X\Psi(t,X))(X_j^2-1)
\end{equation*}
by the same arguments as in the proof of Lemma \ref{lem:conviP}, we have $\|W_n-\tilde{W}_n^\bullet\|_{L^2}\cp0$. Now, by the central limit theorem in Hilbert spaces, we have
\begin{equation*}
\sqrt{n}\left(\frac{\tilde{W}_n^\bullet}{\sqrt{n}}-z\right)\cd \mathcal{W},
\end{equation*}
where $\mathcal{W}\in L^2$ is a centered Gaussian process with covariance kernel $K_W(s,t)=\mbox{Cov}(\tilde{W}_1^\bullet(s),\tilde{W}_1^\bullet(t))$. The stated formula is derived by straightforward calculation.
\end{proof}

\section{4th cumulant of $Z_{\infty,a}$}\label{app:4cum}
\begin{landscape}
{\tiny
\begin{eqnarray*}
\kappa_4(a)&=&48\,{\pi}^{2} \left( {a}^{2}+5/2\,a+5/4 \right) ^{-5} \left( a+1/2 \right) ^{-5}
 \left( a+2 \right) ^{-4} \left( {a}^{2}+2\,a+1/2 \right) ^{-3} \left( {
a}^{2}+3/2\,a+1/4 \right) ^{-5} \left( a+1 \right) ^{-6} \left( a+3/2
 \right) ^{-5}(2\,{a}^{3}+6\,{a}^{2}+5\,a+1)^{-1/2}({a}^{2}+3\,a+2
)^{-1/2}{a}^{-9/2}\cdot\ldots\\&&
\cdot
\left( -\frac {15888388972237\,\sqrt {4\,{a}^{3}+12\,{a
}^{2}+11\,a+3}\sqrt {4\,{a}^{2}+8\,a+3}\sqrt {2\,{a}^{3}+6\,{a}^{2}+5
\,a+1}\sqrt {16\,{a}^{4}+64\,{a}^{3}+84\,{a}^{2}+40\,a+5}}{67108864}
 \left( \sqrt {a+1} \left( {\frac {10016986656268288}{15888388972237}{
a}^{{\frac{67}{2}}}}+{\frac {1444270220426605\,{a}^{{\frac{25}{2}}}}{
63553555888948}}\right.\right.\right.\\
&&\left.\left.\left.+{\frac {82389179722704\,{a}^{23/2}}{15888388972237}}+
{a}^{21/2}+{\frac {10148404334297\,{a}^{19/2}}{63553555888948}}+{
\frac {1304640003889\,{a}^{17/2}}{63553555888948}}+{\frac {32404982280
\,{a}^{15/2}}{15888388972237}}+{\frac {259076456429254198}{
15888388972237}{a}^{{\frac{39}{2}}}}+{\frac {144535729392983458}{
15888388972237}{a}^{{\frac{37}{2}}}}+{\frac {243542217879163392}{
15888388972237}{a}^{{\frac{59}{2}}}}
\right.\right.\right.\\
&&\left.\left.\left.+{\frac {400118395950180864}{
15888388972237}{a}^{{\frac{57}{2}}}}+{\frac {586996844600676480}{
15888388972237}{a}^{{\frac{55}{2}}}}+{\frac {770871902940374400}{
15888388972237}{a}^{{\frac{53}{2}}}}+{\frac {21115219451189760}{
369497417959}{a}^{{\frac{51}{2}}}}+{\frac {960527932076785920}{
15888388972237}{a}^{{\frac{49}{2}}}}+{\frac {7501512704}{
15888388972237}{a}^{{\frac{83}{2}}}}+{\frac {100514398208}{
15888388972237}{a}^{{\frac{81}{2}}}}\right.\right.\right.\\
&&\left.\left.\left.+{\frac {975356035072}{
15888388972237}{a}^{{\frac{79}{2}}}}+{\frac {7304351055872}{
15888388972237}{a}^{{\frac{77}{2}}}}+{\frac {43933586817024}{
15888388972237}{a}^{{\frac{75}{2}}}}+{\frac {72157365052385701}{
15888388972237}{a}^{{\frac{35}{2}}}}+{\frac {32130704968678825}{
15888388972237}{a}^{{\frac{33}{2}}}}+{\frac {25411446550103755}{
31776777944474}{a}^{{\frac{31}{2}}}}+{\frac {8873302935379397}{
31776777944474}{a}^{{\frac{29}{2}}}}\right.\right.\right.\\
&&\left.\left.\left.+{\frac {5432365877003493}{
63553555888948}{a}^{{\frac{27}{2}}}}+{\frac {2418660\,{a}^{9/2}}{
15888388972237}}+{\frac {26902928333987840}{15888388972237}{a}^{{\frac
{65}{2}}}}+{\frac {63417825270829056}{15888388972237}{a}^{{\frac{63}{2
}}}}+{\frac {131966188294238208}{15888388972237}{a}^{{\frac{61}{2}}}}+
{\frac {2332420110\,{a}^{13/2}}{15888388972237}}+{\frac {108093420\,{a
}^{11/2}}{15888388972237}}\right.\right.\right.\\
&&\left.\left.\left.+{\frac {601727316318085656}{15888388972237}
{a}^{{\frac{43}{2}}}}+{\frac {8388608}{369497417959}{a}^{{\frac{85}{2}
}}}+{\frac {913594901408998032}{15888388972237}{a}^{{\frac{47}{2}}}}+{
\frac {781693114431576816}{15888388972237}{a}^{{\frac{45}{2}}}}+{
\frac {416552354601936792}{15888388972237}{a}^{{\frac{41}{2}}}}+{
\frac {218077726638080}{15888388972237}{a}^{{\frac{73}{2}}}}+{\frac {
911030547578880}{15888388972237}{a}^{{\frac{71}{2}}}}\right.\right.\right.\\
&&\left.\left.\left.+{\frac {
3250400705183744}{15888388972237}{a}^{{\frac{69}{2}}}}+{\frac {8388608
}{15888388972237}{a}^{{\frac{87}{2}}}} \right) \sqrt {{a}^{2}+3\,a+2}+
\frac {73701960223245\,\sqrt {a+2}}{63553555888948} \left( {\frac {
49226553320341504}{24567320074415}{a}^{{\frac{67}{2}}}}+{\frac {
1773826939317421\,{a}^{{\frac{25}{2}}}}{73701960223245}}+{\frac {
393110274779764\,{a}^{23/2}}{73701960223245}}+{a}^{21/2}\right.\right.\right.\\
&&\left.\left.\left.+{\frac {
3817681446062\,{a}^{19/2}}{24567320074415}}+{\frac {1434259933009\,{a}
^{17/2}}{73701960223245}}+{\frac {9263307304\,{a}^{15/2}}{
4913464014883}}+{\frac {1614448743288950624}{73701960223245}{a}^{{
\frac{39}{2}}}}+{\frac {866772377781476636}{73701960223245}{a}^{{\frac
{37}{2}}}}+{\frac {858214151772459008}{24567320074415}{a}^{{\frac{59}{
2}}}}+{\frac {1316153654067809792}{24567320074415}{a}^{{\frac{57}{2}}}
}\right.\right.\right.\\
&&\left.\left.\left.+{\frac {362098332677613568}{4913464014883}{a}^{{\frac{55}{2}}}}+{
\frac {447687023824409088}{4913464014883}{a}^{{\frac{53}{2}}}}+{\frac
{498261964927452160}{4913464014883}{a}^{{\frac{51}{2}}}}+{\frac {
2498830444647711936}{24567320074415}{a}^{{\frac{49}{2}}}}+{\frac {
432063643648}{73701960223245}{a}^{{\frac{83}{2}}}}+{\frac {
286898782208}{4913464014883}{a}^{{\frac{81}{2}}}}+{\frac {
11039609454592}{24567320074415}{a}^{{\frac{79}{2}}}}\right.\right.\right.\\
&&\left.\left.\left.+{\frac {
204951751491584}{73701960223245}{a}^{{\frac{77}{2}}}}+{\frac {
1048045253820416}{73701960223245}{a}^{{\frac{75}{2}}}}+{\frac {
417152280084258104}{73701960223245}{a}^{{\frac{35}{2}}}}+{\frac {
11956380864994854}{4913464014883}{a}^{{\frac{33}{2}}}}+{\frac {
22856499656988768}{24567320074415}{a}^{{\frac{31}{2}}}}+{\frac {
23178971747762287}{73701960223245}{a}^{{\frac{29}{2}}}}+{\frac {
6876636097430098}{73701960223245}{a}^{{\frac{27}{2}}}}\right.\right.\right.\\
&&\left.\left.\left.+{\frac {644976
\,{a}^{9/2}}{4913464014883}}+{\frac {361283014419267584}{
73701960223245}{a}^{{\frac{65}{2}}}}+{\frac {260512018086756352}{
24567320074415}{a}^{{\frac{63}{2}}}}+{\frac {100135574979573760}{
4913464014883}{a}^{{\frac{61}{2}}}}+{\frac {650803608\,{a}^{13/2}}{
4913464014883}}+{\frac {29469888\,{a}^{11/2}}{4913464014883}}+{\frac {
1357706227893363264}{24567320074415}{a}^{{\frac{43}{2}}}}+{\frac {
31448891392}{73701960223245}{a}^{{\frac{85}{2}}}}\right.\right.\right.\\
&&\left.\left.\left.+{\frac {
2260384021120766464}{24567320074415}{a}^{{\frac{47}{2}}}}+{\frac {
1844560574332883296}{24567320074415}{a}^{{\frac{45}{2}}}}+{\frac {
540503048824952792}{14740392044649}{a}^{{\frac{41}{2}}}}+{\frac {
903286619373568}{14740392044649}{a}^{{\frac{73}{2}}}}+{\frac {
16645725011050496}{73701960223245}{a}^{{\frac{71}{2}}}}+{\frac {
17689849815269376}{24567320074415}{a}^{{\frac{69}{2}}}}+{\frac {
33554432}{73701960223245}{a}^{{\frac{89}{2}}}}\right.\right.\right.\\
&&\left.\left.\left.+{\frac {1476395008}{
73701960223245}{a}^{{\frac{87}{2}}}} \right)  \right) + \left(
\frac {241982982129125\,\sqrt {a+2}\sqrt {4\,{a}^{2}+8\,a+3}\sqrt {4\,
{a}^{3}+12\,{a}^{2}+11\,a+3}}{4294967296} \left( {\frac {
9170715202355724288}{48396596425825}{a}^{{\frac{67}{2}}}}+{\frac {
824829361875297\,{a}^{{\frac{25}{2}}}}{19358638570330}}+{\frac {
13839308388342\,{a}^{23/2}}{1935863857033}}+{a}^{21/2}\right.\right.\right.\\
&&\left.\left.\left.+{\frac {
221172245710\,{a}^{19/2}}{1935863857033}}+{\frac {20067723385\,{a}^{17
/2}}{1935863857033}}+{\frac {1388405400\,{a}^{15/2}}{1935863857033}}+{
\frac {9097014341614307008}{48396596425825}{a}^{{\frac{39}{2}}}}+{
\frac {4032572610721320364}{48396596425825}{a}^{{\frac{37}{2}}}}+{
\frac {77530781171012009984}{48396596425825}{a}^{{\frac{59}{2}}}}+{
\frac {503148655054716305408}{241982982129125}{a}^{{\frac{57}{2}}}}\right.\right.\right.\\
&&\left.\left.\left.+{
\frac {23501677425911791616}{9679319285165}{a}^{{\frac{55}{2}}}}+{
\frac {617790924040639606784}{241982982129125}{a}^{{\frac{53}{2}}}}+{
\frac {585163120864498180096}{241982982129125}{a}^{{\frac{51}{2}}}}+{
\frac {499256744973609739264}{241982982129125}{a}^{{\frac{49}{2}}}}+{
\frac {1328326413123584}{241982982129125}{a}^{{\frac{83}{2}}}}+{\frac
{8609490215632896}{241982982129125}{a}^{{\frac{81}{2}}}}\right.\right.\right.\\
&&\left.\left.\left.+{\frac {
46128149280325632}{241982982129125}{a}^{{\frac{79}{2}}}}+{\frac {
208355953949540352}{241982982129125}{a}^{{\frac{77}{2}}}}+{\frac {
805192554923425792}{241982982129125}{a}^{{\frac{75}{2}}}}+{\frac {
7935831986201000696}{241982982129125}{a}^{{\frac{35}{2}}}}+{\frac {
552080771887033714}{48396596425825}{a}^{{\frac{33}{2}}}}+{\frac {
168771272531369632}{48396596425825}{a}^{{\frac{31}{2}}}}\right.\right.\right.\\
&&\left.\left.\left.+{\frac {
225198237739750229}{241982982129125}{a}^{{\frac{29}{2}}}}+{\frac {
2081596612710102}{9679319285165}{a}^{{\frac{27}{2}}}}+{\frac {32400\,{
a}^{9/2}}{1935863857033}}+{\frac {740354924891209728}{1935863857033}{a
}^{{\frac{65}{2}}}}+{\frac {166605364226116550656}{241982982129125}{a}
^{{\frac{63}{2}}}}+{\frac {268353384686546747392}{241982982129125}{a}^
{{\frac{61}{2}}}}+{\frac {1073741824}{241982982129125}{a}^{{\frac{93}{
2}}}}\right.\right.\right.\\
&&\left.\left.\left.+{\frac {49392123904}{241982982129125}{a}^{{\frac{91}{2}}}}+{
\frac {68682600\,{a}^{13/2}}{1935863857033}}+{\frac {2160000\,{a}^{11/
2}}{1935863857033}}+{\frac {32883798565740245888}{48396596425825}{a}^{
{\frac{43}{2}}}}+{\frac {32972128387072}{48396596425825}{a}^{{\frac{85
}{2}}}}+{\frac {383500451258021994496}{241982982129125}{a}^{{\frac{47}
{2}}}}+{\frac {264962795071741033024}{241982982129125}{a}^{{\frac{45}{
2}}}}\right.\right.\right.\\
&&\left.\left.\left.+{\frac {91456134448134851632}{241982982129125}{a}^{{\frac{41}{2}
}}}+{\frac {2692695772327051264}{241982982129125}{a}^{{\frac{73}{2}}}}
+{\frac {7862451184013934592}{241982982129125}{a}^{{\frac{71}{2}}}}+{
\frac {20189169394128519168}{241982982129125}{a}^{{\frac{69}{2}}}}+{
\frac {1100316934144}{241982982129125}{a}^{{\frac{89}{2}}}}+{\frac {
15815143325696}{241982982129125}{a}^{{\frac{87}{2}}}} \right)\right.\right.\\
&&\left.\left. -
\frac {386109436024375\,\sqrt {{a}^{2}+3\,a+2}}{536870912} \left( -{
\frac {84304887466191880192}{77221887204875}{a}^{{\frac{67}{2}}}}-{
\frac {1061623519833274\,{a}^{{\frac{25}{2}}}}{15444377440975}}-{
\frac {35806380491592\,{a}^{23/2}}{3088875488195}}-{\frac {
5052981011802\,{a}^{21/2}}{3088875488195}}-{\frac {116854259524\,{a}^{
19/2}}{617775097639}}-{\frac {10758989058\,{a}^{17/2}}{617775097639}}\right.\right.\right.\\
&&\left.\left.\left.-
{\frac {756947760\,{a}^{15/2}}{617775097639}}-{\frac {
130113321129467042688}{386109436024375}{a}^{{\frac{39}{2}}}}-{\frac {
55835065962877271824}{386109436024375}{a}^{{\frac{37}{2}}}}-{\frac {
2196055362565462155264}{386109436024375}{a}^{{\frac{59}{2}}}}-{\frac {
2580351736494166597632}{386109436024375}{a}^{{\frac{57}{2}}}}-{\frac {
392622223567954968576}{55158490860625}{a}^{{\frac{55}{2}}}}\right.\right.\right.
\end{eqnarray*}
\begin{eqnarray*}
&&\left.\left.\left.-{\frac {
2654886604198704375552}{386109436024375}{a}^{{\frac{53}{2}}}}-{\frac {
2326262930245559023104}{386109436024375}{a}^{{\frac{51}{2}}}}-{\frac {
1848456566020192628672}{386109436024375}{a}^{{\frac{49}{2}}}}-{\frac {
65342235671003136}{386109436024375}{a}^{{\frac{83}{2}}}}-{\frac {
313552794399277056}{386109436024375}{a}^{{\frac{81}{2}}}}-{\frac {
7378867027181568}{2206339634425}{a}^{{\frac{79}{2}}}}\right.\right.\right.\\
&&\left.\left.\left.-{\frac {
4616904135234551808}{386109436024375}{a}^{{\frac{77}{2}}}}-{\frac {
2892470402622160896}{77221887204875}{a}^{{\frac{75}{2}}}}-{\frac {
21405868799083127328}{386109436024375}{a}^{{\frac{35}{2}}}}-{\frac {
7296425515279897524}{386109436024375}{a}^{{\frac{33}{2}}}}-{\frac {
2198447109494797632}{386109436024375}{a}^{{\frac{31}{2}}}}-{\frac {
581443725209867406}{386109436024375}{a}^{{\frac{29}{2}}}}\right.\right.\right.\\
&&\left.\left.\left.-{\frac {
5353814148669156}{15444377440975}{a}^{{\frac{27}{2}}}}-{\frac {18720\,
{a}^{9/2}}{617775097639}}-{\frac {21229402741482749952}{11031698172125
}{a}^{{\frac{65}{2}}}}-{\frac {1180180835631042134016}{386109436024375
}{a}^{{\frac{63}{2}}}}-{\frac {1692713927754382053376}{386109436024375
}{a}^{{\frac{61}{2}}}}-{\frac {1073741824}{386109436024375}{a}^{{\frac
{97}{2}}}}-{\frac {51539607552}{386109436024375}{a}^{{\frac{95}{2}}}}\right.\right.\right.\\
&&\left.\left.\left.-
{\frac {171530256384}{55158490860625}{a}^{{\frac{93}{2}}}}-{\frac {
3617973075968}{77221887204875}{a}^{{\frac{91}{2}}}}-{\frac {38138960\,
{a}^{13/2}}{617775097639}}-{\frac {174720\,{a}^{11/2}}{88253585377}}-{
\frac {511484786629896684992}{386109436024375}{a}^{{\frac{43}{2}}}}-{
\frac {460544930742272}{15444377440975}{a}^{{\frac{85}{2}}}}-{\frac {
1331143346975602002432}{386109436024375}{a}^{{\frac{47}{2}}}}\right.\right.\right.\\
&&\left.\left.\left.-{\frac {
867861901016544200352}{386109436024375}{a}^{{\frac{45}{2}}}}-{\frac {
271954859321968227744}{386109436024375}{a}^{{\frac{41}{2}}}}-{\frac {
1599308948797128704}{15444377440975}{a}^{{\frac{73}{2}}}}-{\frac {
19627491529479684096}{77221887204875}{a}^{{\frac{71}{2}}}}-{\frac {
214901751061628780544}{386109436024375}{a}^{{\frac{69}{2}}}}-{\frac {
39628321062912}{77221887204875}{a}^{{\frac{89}{2}}}}\right.\right.\right.\\
&&\left.\left.\left.-{\frac {
1681779270352896}{386109436024375}{a}^{{\frac{87}{2}}}}+ \left( {
\frac {142095030781925720064}{386109436024375}{a}^{{\frac{67}{2}}}}+{
\frac {26208105471852\,{a}^{{\frac{25}{2}}}}{617775097639}}+{\frac {
22013701833097\,{a}^{23/2}}{3088875488195}}+{a}^{21/2}+{\frac {
70913863773\,{a}^{19/2}}{617775097639}}+{\frac {6473344595\,{a}^{17/2}
}{617775097639}}+{\frac {64439680\,{a}^{15/2}}{88253585377}}\right.\right.\right.\right.\\
&&\left.\left.\left.\left.+{\frac {
76842283296113132568}{386109436024375}{a}^{{\frac{39}{2}}}}+{\frac {
33474782549507146248}{386109436024375}{a}^{{\frac{37}{2}}}}+{\frac {
934445814340112560128}{386109436024375}{a}^{{\frac{59}{2}}}}+{\frac {
1151926764144741052416}{386109436024375}{a}^{{\frac{57}{2}}}}+{\frac {
256513948672423369728}{77221887204875}{a}^{{\frac{55}{2}}}}+{\frac {
1290647472597674001408}{386109436024375}{a}^{{\frac{53}{2}}}}\right.\right.\right.\right.\\
&&\left.\left.\left.\left.+{\frac {
1174120309805297830656}{386109436024375}{a}^{{\frac{51}{2}}}}+{\frac {
193088395545537371904}{77221887204875}{a}^{{\frac{49}{2}}}}+{\frac {
9975401333915648}{386109436024375}{a}^{{\frac{83}{2}}}}+{\frac {
11006602781917184}{77221887204875}{a}^{{\frac{81}{2}}}}+{\frac {
256349979600224256}{386109436024375}{a}^{{\frac{79}{2}}}}+{\frac {
146183491321593856}{55158490860625}{a}^{{\frac{77}{2}}}}\right.\right.\right.\right.\\
&&\left.\left.\left.\left.+{\frac {
3540684493395329024}{386109436024375}{a}^{{\frac{75}{2}}}}+{\frac {
12987588438897780244}{386109436024375}{a}^{{\frac{35}{2}}}}+{\frac {
7146557038684388}{617775097639}{a}^{{\frac{33}{2}}}}+{\frac {
1353904109355641646}{386109436024375}{a}^{{\frac{31}{2}}}}+{\frac {
71847624095850058}{77221887204875}{a}^{{\frac{29}{2}}}}+{\frac {
3309922453293424}{15444377440975}{a}^{{\frac{27}{2}}}}+{\frac {10800\,
{a}^{9/2}}{617775097639}}\right.\right.\right.\right.\\
&&\left.\left.\left.\left.+{\frac {38185865328657891328}{55158490860625
}{a}^{{\frac{65}{2}}}}+{\frac {450927587008642531328}{386109436024375}
{a}^{{\frac{63}{2}}}}+{\frac {683913887836570927104}{386109436024375}{
a}^{{\frac{61}{2}}}}+{\frac {1073741824}{386109436024375}{a}^{{\frac{
95}{2}}}}+{\frac {50465865728}{386109436024375}{a}^{{\frac{93}{2}}}}+{
\frac {164282499072}{55158490860625}{a}^{{\frac{91}{2}}}}+{\frac {
22493400\,{a}^{13/2}}{617775097639}}\right.\right.\right.\right.\\
&&\left.\left.\left.\left.+{\frac {713520\,{a}^{11/2}}{
617775097639}}+{\frac {290355193731959736432}{386109436024375}{a}^{{
\frac{43}{2}}}}+{\frac {1496880726933504}{386109436024375}{a}^{{\frac{
85}{2}}}}+{\frac {143425519049003786752}{77221887204875}{a}^{{\frac{47
}{2}}}}+{\frac {480706690484033150464}{386109436024375}{a}^{{\frac{45}
{2}}}}+{\frac {157715599116006339888}{386109436024375}{a}^{{\frac{41}{
2}}}}+{\frac {17145576324857856}{617775097639}{a}^{{\frac{73}{2}}}}\right.\right.\right.\right.\\
&&\left.\left.\left.\left.+{
\frac {28574379833515835392}{386109436024375}{a}^{{\frac{71}{2}}}}+{
\frac {67525511518112448512}{386109436024375}{a}^{{\frac{69}{2}}}}+{
\frac {3385507971072}{77221887204875}{a}^{{\frac{89}{2}}}}+{\frac {
180938852007936}{386109436024375}{a}^{{\frac{87}{2}}}} \right) \sqrt {
4\,{a}^{2}+8\,a+3} \right)  \right) \sqrt {2\,{a}^{3}+6\,{a}^{2}+5\,a
+1}\right.\\
&&\left.+\frac {11799584447520125\,\sqrt {4\,{a}^{2}+8\,a+3}\sqrt {2}
\sqrt {4\,{a}^{3}+12\,{a}^{2}+11\,a+3}\sqrt {a+2}\sqrt {a+1}}{
17179869184} \left( {\frac {2338874529100001705984}{11799584447520125}
{a}^{{\frac{67}{2}}}}+{\frac {17755948088155402\,{a}^{{\frac{25}{2}}}
}{471983377900805}}+{\frac {632454534820939\,{a}^{23/2}}{
94396675580161}}+{a}^{21/2}\right.\right.\\
&&\left.\left.+{\frac {11559938854395\,{a}^{19/2}}{
94396675580161}}+{\frac {1128542725445\,{a}^{17/2}}{94396675580161}}+{
\frac {84261705600\,{a}^{15/2}}{94396675580161}}+{\frac {
308188896351346168064}{2359916889504025}{a}^{{\frac{39}{2}}}}+{\frac {
692768130919254100736}{11799584447520125}{a}^{{\frac{37}{2}}}}+{\frac
{15795271654708880736256}{11799584447520125}{a}^{{\frac{59}{2}}}}\right.\right.\\
&&\left.\left.+{
\frac {19647944888437401321472}{11799584447520125}{a}^{{\frac{57}{2}}}
}+{\frac {4420323669413709512704}{2359916889504025}{a}^{{\frac{55}{2}}
}}+{\frac {22501362691630829371392}{11799584447520125}{a}^{{\frac{53}{
2}}}}+{\frac {20742701122730313433088}{11799584447520125}{a}^{{\frac{
51}{2}}}}+{\frac {3462982744024769019904}{2359916889504025}{a}^{{\frac
{49}{2}}}}+{\frac {160212574372102144}{11799584447520125}{a}^{{\frac{
83}{2}}}}\right.\right.\\
&&\left.\left.+{\frac {177063682063204352}{2359916889504025}{a}^{{\frac{81}
{2}}}}+{\frac {4132075315205767168}{11799584447520125}{a}^{{\frac{79}{
2}}}}+{\frac {3306593531203682304}{2359916889504025}{a}^{{\frac{77}{2}
}}}+{\frac {57363506280532017152}{11799584447520125}{a}^{{\frac{75}{2}
}}}+{\frac {55679266648601820368}{2359916889504025}{a}^{{\frac{35}{2}}
}}+{\frac {99549118197334017808}{11799584447520125}{a}^{{\frac{33}{2}}
}}\right.\right.\\
&&\left.\left.+{\frac {6299684566265892228}{2359916889504025}{a}^{{\frac{31}{2}}}}
+{\frac {8759154059363803308}{11799584447520125}{a}^{{\frac{29}{2}}}}+
{\frac {84918717349357034}{471983377900805}{a}^{{\frac{27}{2}}}}+{
\frac {2494800\,{a}^{9/2}}{94396675580161}}+{\frac {
4423768165953499561984}{11799584447520125}{a}^{{\frac{65}{2}}}}+{
\frac {7508997058376752955392}{11799584447520125}{a}^{{\frac{63}{2}}}}
\right.\right.\\
&&\left.\left.+{\frac {11468876539243907776512}{11799584447520125}{a}^{{\frac{61}{2}
}}}+{\frac {17179869184}{11799584447520125}{a}^{{\frac{95}{2}}}}+{
\frac {807453851648}{11799584447520125}{a}^{{\frac{93}{2}}}}+{\frac {
3680786972672}{2359916889504025}{a}^{{\frac{91}{2}}}}+{\frac {
4508001000\,{a}^{13/2}}{94396675580161}}+{\frac {153522000\,{a}^{11/2}
}{94396675580161}}+{\frac {5521874557177178040576}{11799584447520125}{
a}^{{\frac{43}{2}}}}\right.\right.\\
&&\left.\left.+{\frac {24009421235421184}{11799584447520125}{a}^
{{\frac{85}{2}}}}+{\frac {2616724871911485932544}{2359916889504025}{a}
^{{\frac{47}{2}}}}+{\frac {8942511036426673245184}{11799584447520125}{
a}^{{\frac{45}{2}}}}+{\frac {3075111890856037819648}{11799584447520125
}{a}^{{\frac{41}{2}}}}+{\frac {174166628281696124928}{
11799584447520125}{a}^{{\frac{73}{2}}}}+{\frac {93225084226451275776}{
2359916889504025}{a}^{{\frac{71}{2}}}}\right.\right.\\
&&\left.\left.+{\frac {1106158108427488854016
}{11799584447520125}{a}^{{\frac{69}{2}}}}+{\frac {54206782242816}{
2359916889504025}{a}^{{\frac{89}{2}}}}+{\frac {2899236068786176}{
11799584447520125}{a}^{{\frac{87}{2}}}} \right) + \left( a+1/2
 \right) ^{5} \left( {a}^{12}+12\,{a}^{11}+63\,{a}^{10}+190\,{a}^{9}+{
\frac {5833\,{a}^{8}}{16}}+{\frac {937\,{a}^{7}}{2}}+{\frac {837\,{a}^
{6}}{2}}+272\,{a}^{5}+{\frac {2191\,{a}^{4}}{16}}+{\frac {223\,{a}^{3}
}{4}}\right.\right.\\
&&\left.\left.+{\frac {141\,{a}^{2}}{8}}+{\frac {15\,a}{4}}+{\frac{105}{256}}
 \right)  \left( {a}^{2}+5/2\,a+5/4 \right) ^{5}\sqrt {2\,{a}^{2}+4\,a
+1} \left( {a}^{2}+2\,a+1/2 \right) ^{3} \left( {a}^{2}+3/2\,a+1/4
 \right) ^{5} \left( a+1 \right) ^{2} \left( a+3/2 \right) ^{5}
 \right)
\end{eqnarray*}
}
\end{landscape}
\end{appendix}

\end{document}